\definecolor{darkred}{rgb}{0.9,0.1,0.1}
\newcommand{\normsob}[2]{\|#1\|_{H^k(#2)}}
\newcommand{\Xnorm}[1]{\| #1\|_{X^k}}
\newcommand{\laplace}[1]{\mathcal{L}\left(#1\right)}
\newtheorem{theorem}{Theorem}[section]
\newtheorem{definition}[theorem]{Definition}
\newtheorem{lemma}[theorem]{Lemma}
\newtheorem{proposition}[theorem]{Proposition}
\newtheorem{remark}[theorem]{Remark}
\title{ \bf On the return to equilibrium problem for axisymmetric floating structures in shallow water}
\author{\small EDOARDO BOCCHI\footnote{Institut de Mathématiques de Bordeaux UMR 5251, Université de Bordeaux,  351 Cours de la Libération, 33405 Talence, France -  \texttt{edoardo.bocchi@math.u-bordeaux.fr}}}
\date{}
\begin{document}
	\maketitle 
	
	\begin{abstract}
	In this paper we address the return to equilibrium problem for an axisymmetric floating structure in shallow water. First we show that the equation for the solid motion can be reduced to a delay differential equation involving an extension-trace operator whose role is to describe the influence of the fluid equations on the solid motion. It turns out that the compatibility conditions on the initial data for the return to equilibrium configuration are not satisfied, so we cannot use the result from \cite{Bocchi} for the nonlinear problem. Hence, assuming small amplitude waves, we linearize the equations in the exterior domain and we keep the nonlinear equations in the interior domain. For such configurations, the extension-trace operator can be computed explicitly and the delay term in the differential equation can be put in convolution form. The solid motion is therefore governed by a nonlinear second order integro-differential equation, whose linearization is the well-known Cummins equation. We show global in time existence and uniqueness of the solution using the conservation of the total fluid-structure energy.
	\end{abstract}

\section{Introduction}
The return to equilibrium problem is a particular configuration of the floating structure problem. It consists in releasing a partially submerged solid body in a fluid initially at rest and letting it evolve towards its equilibrium position. The interest of this problem is that it can easily be done experimentally and it is used in engineering to determine several important characteristics of floating objects. More precisely, engineers assume that the solid satisfies a linear integro-differential equation, the Cummins equation (see \cite{cummins1962}). The experimental data coming from the return to equilibrium problem (also called decay test) are then used to identify the coefficients of this linear equation.
 John in \cite{John} studied the problem in shallow water in one horizontal dimension for an object with flat bottom: he considered the linearized fluid equations for small amplitude waves and he wrote an explicit expression for the solid motion under linear approximation. Ursell in \cite{Ursell} and Maskell and Ursell \cite{MaskUrs}, using like John the linear approach, obtained an explicit solution in integral form for the vertical displacement of the object. Still under the linear approximation Cummins in \cite{cummins1962} treated a general ship motion and reduced the free motion of the floating body to an integro-differential equation. From Wehausen and Laitone \cite{WeLai} we know that also Sretenskii, several years before Cummins, obtained an integro-differential equation for the vertical displacement which he solved numerically. The Cummins equation for the vertical displacement reads
\begin{equation}
\left(m+ a_\infty \right)\ddot{\delta}_G(t)= -c\delta_G(t) -\int_{0}^{t}K(s)\dot{\delta}_G(t-s)ds,
	\label{integrodiff}
\end{equation} where $\delta_G(t)= z_G(t) -z_{G,eq}$ is the displacement from the equilibrium position of the vertical position of the centre of mass, $m$ is the mass of the structure, $a_\infty$ is the added mass at infinity frequency, $c$ is the hydrostatic coefficient and $K$ is the impulse response function (also known as retardation function and fluid memory). It appears in naval architecture and hydrodynamical engineering literature and it is used to study the motion of ships or wave energy converters. Recently Lannes in his paper \cite{Lan} on the dynamics of floating structures modelled the return to equilibrium problem using a different formulation for the hydrodynamical model with the aim to take into account nonlinear effects. He wrote the explicit equations in the one-dimensional (horizontal) case and, considering the nonlinear shallow water model, he showed that the position of the solid is fully determined by the nonlinear second order damped ODE
\begin{equation}
	(m+m_a(\delta_G)) \ddot{\delta}_G(t)= -\mathfrak{c} \delta_G(t) - \nu(\dot{\delta}_G)+ \beta(\delta_G)\dot{\delta}^2_G(t),
	\label{eqmotion1D}
\end{equation}where $m_a(\delta_G)$ is the nonlinear added mass and $\nu(\dot{\delta}_G)$ is the nonlinear damping term. Numerical simulations for the one dimensional model proposed by Lannes are made in \cite{WahlPhD}.\\
 In our recent paper \cite{Bocchi} we dealt with the two-dimensional (horizontal) case, we showed the local well-posedness for the axisymmetric floating structure problem in the shallow water regime for initial data regular enough, provided some compatibility conditions are satisfied. We considered a solid, with vertical side-walls and a cylindrical symmetry, forced to move only vertically. For such a configuration, the horizontal coordinates of the contact line between the air, the fluid and the solid, are time independent. For an object with no vertical walls, finding the horizontal coordinates of the contact line is a free boundary problem, recently solved in the one horizontal dimension case by Iguchi and Lannes in \cite{IguLan} where the contact line is replaced by two contact points. The floating structure problem for a viscous fluid in a one dimensional bounded domain is considered in \cite{MaiSMTakTuc}.\\
 The aim of this paper is to extend the work of Lannes on the return to equilibrium problem to the two-dimensional case taking into account nonlinear effects and using the same framework as in \cite{Bocchi}, which means that we consider here the axisymmetric setting, the shallow water approximation for the fluid and a solid with the properties we have stated before. An important change with respect to the one-dimensional case is the presence of delay terms in the equation governing the solid motion. The nonlinear coupled system can be treated in an abstract way but, as we show here, it requires compatibility conditions that are not satisfied in the return to equilibrium problem. For this reason, we linearize the equations in the exterior domain but we keep the nonlinear effects in the interior domain. This approach permits us to improve the classical linear model and we get a nonlinear second order delay differential equation on the vertical displacement of the structure. If we linearize around the equilibrium state we get the standard linear Cummins equation, hence we can see the result of this paper as a rigorous justification and an extension of Cummins' work.
 
\subsection{Outline of the paper}
In Section 2 we first recall the notations for the floating structures that we have used in \cite{Bocchi} and we write the equations for the coupled problem. Then we show that the differential equation for the solid motion can be written in a closed form by introducing an extension-trace operator, which takes the boundary value of the horizontal discharge, defined as the fluid horizontal velocity vertically integrated, in the exterior domain and gives the boundary value of the fluid height in the exterior domain. In Theorem \ref{nonlinearsolideq} we solve the equation by a fixed point argument. Finally we consider the return to equilibrium configuration, giving the initial conditions on the fluid and solid unknowns. It turns out that the compatibility conditions, which are necessary in order to apply the existence and uniqueness theorem from \cite{Bocchi}, are not satisfied for these particular initial conditions.\\ In Section 3 we neglect the nonlinear effects in the exterior region, but we keep them under the object provided it does not touch the bottom of the fluid domain. We write a linear-nonlinear model for the floating structure problem: we linearize the equations in the exterior domain and we keep the nonlinearities in the interior domain. Hence the equations for the fluid in the exterior domain become the linear shallow water equations and the free surface elevation in the exterior domain satisfies a wave equation. Then, by applying a Fourier-Laplace transform argument, we can write the trace of the exterior free surface elevation at the boundary as a convolution product between the inverse Laplace transform of a Hankel function and the time derivative of the displacement $\delta_G$. Hence we have that the solid motion is described by the nonlinear integro-differential equation
\begin{equation}\begin{aligned}
(m+m_a(\delta_G)) \ddot{\delta}_G=& -\mathfrak{c}\delta_G-\nu\dot{\delta}_G+\mathfrak{c} \int_{0}^{t}F(s)\dot{\delta}_G(t-s)ds +\left(\mathfrak{b}(\dot{\delta}_G)+\beta(\delta_G)\right)\dot{\delta}^2_G \ .
\end{aligned}
\label{integro}
\end{equation}Differently from \eqref{eqmotion1D}, the damping term is a linear function of $\dot{\delta}_G$ since the equations in the exterior domain are linear and it is given by a delay term due to dispersion which does not occur in one dimension. Its linearization around the equilibrium gives a reformulation of the Cummins equation for the vertical displacement \eqref{integrodiff}. We show in Theorem \ref{globalex} the global existence and uniqueness of its solution, provided an admissibility condition for the initial datum, using the conservation of the total fluid-structure energy. In Section 4 we explain the numerical method we use to plot the time evolution of the vertical displacement of the structure for the return to equilibrium problem. We compare the numerical solution to the nonlinear integro-differential equation with the solution to the linear Cummins equation and we note that for large initial data the nonlinear effects should not be neglected.
In Appendix A we define the Hankel functions and we show some properties and results.

\subsection*{Acknowledgements}
The author warmly thanks Kai Koike for his precious advises on the long time behaviour of the impulse response function and Pierre Magal for his remarks on functional differential equations with infinite delay.\\\\
The author was supported by the grants ANR-17-CE40-0025 NABUCO and ANR-18-CE40-0027 SingFlows, the Junior Chair BOLIDE of the IDEX of the University of Bordeaux, the Simone and Cino Del Duca Foundation and the Nouvelle Aquitaine Regional Council.
\section{Nonlinear floating structure equations}\label{FSequations}
Let us recall the following notations: 
$\zeta(t,r)$ is the elevation of the free surface, $h(t,r)=\zeta(t,r)+h_0$ is the fluid height, $q(t,r)$ is the horizontal discharge, \text{i.e.} the radial component of the fluid velocity vertically integrated, $\underline{P}$ is the trace of the pressure at the surface and $\zeta_w(t,r)$ is the parametrization of the bottom of the solid. The centre of mass of the solid is $G(t)=(0,0,z_G(t))$ and its velocity is $\mathbf{U}_G(t)=(0,0,w_G(t))$. We define  $\delta_G(t)=z_G(t)-z_{G,eq}$ the displacement from the equilibrium of the vertical position of the centre of mass. We denote by $\rho_m$ the density of the floating body and $H$ its height. The fluid domain is $$\Omega(t)=\{(r,z)\in \mathbb{R}_+\times \mathbb{R} \ | \ -h_0< z< \zeta(t,r) \}.$$
Moreover, as shown in Figure \ref{image1}, the presence of the solid permits us to divide the radial line in two regions, the interior domain $(0,R)$ and the exterior domain $(R,+\infty)$, whose boundary is the projection $r=R$ of the contact line between the fluid, the air and the body. Throughout all the paper we will note, for a function $f(r)$, 
\begin{equation*}
	f_i:=f_{|_{r\in(0,R)}} \ \ \ \ \ \ \ \ \mbox{and} \ \ \  \ \ \ \ \	f_e:=f_{|_{r\in(R,+\infty)}}.
\end{equation*} We have the contact constraint in the interior domain
\begin{equation}
\zeta_i(t,r)=\zeta_w(t,r).
\label{interiorconstraint}
\end{equation}As in the standard water waves theory we assume that the initial height of the fluid in the exterior domain does not vanish, \textrm{i.e.} there exists $h_{m}>0$ such that
\begin{equation}
 h_e(0,r)\geq h_{m}  \quad \mbox{for} \quad r\in(R,+\infty).
\label{he>hm}
\end{equation}For the sake of the problem, we suppose also that the solid does not touch the bottom of the domain during its motion. Hence we assume that the initial height of the fluid under the solid does not vanish, \textrm{i.e.} there exists $h_{min}>0$ such that 
\begin{equation}
h_w(0,r)\geq h_{min} \quad \mbox{for} \quad r\in(0,R),
\label{hw>hmin}
\end{equation}with $h_w(0,r)=h_i(0,r)$ in the interior domain due to \eqref{interiorconstraint}.\\ 
\begin{figure}\centering
	\includegraphics[scale=1]{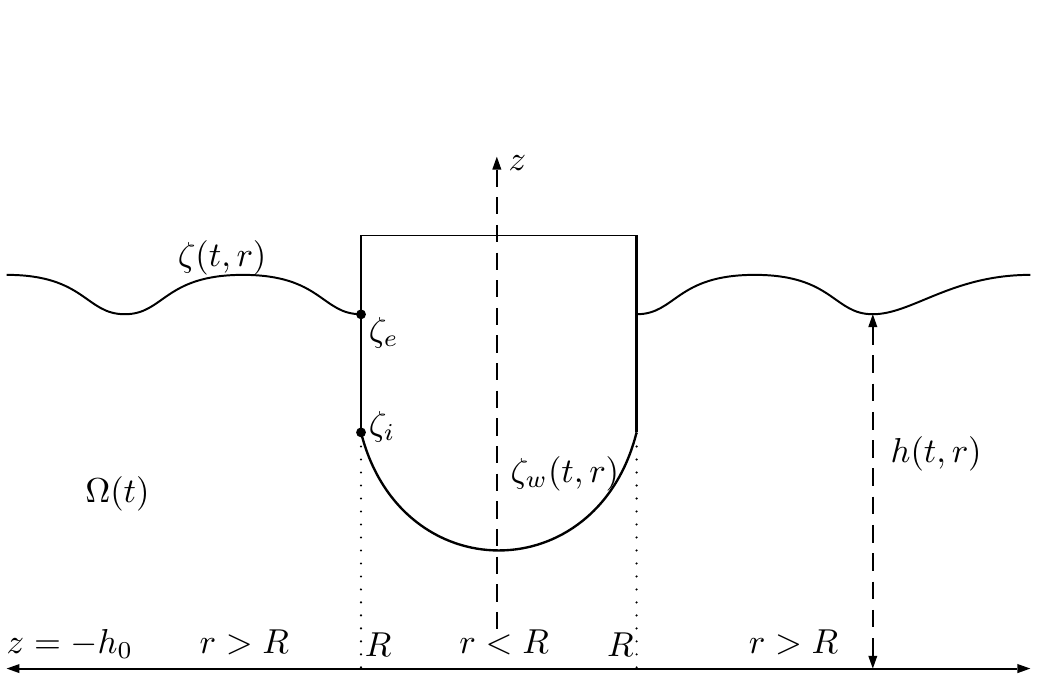}
	\caption{Vertical cross section of a cylindrically symmetric floating structure with vertical side-walls.}
	\label{image1}
\end{figure}\noindent We showed in \cite{Bocchi} that the floating structure problem in the case of an axisymmetric flow without swirl is described by 
\begin{equation}
\begin{cases}
\partial_t h +\partial_r q +\dfrac{q}{r}=0,\\[10pt]
\partial_t q + \partial_r \left(\dfrac{q^2}{h}\right)+\dfrac{q^2}{rh}+gh\partial_r h =-\dfrac{h}{\rho}\partial_r\underline{P}
\end{cases}
\label{WWFloatpolar}
\end{equation} coupled with the transition condition
\begin{equation}
q_{e_{|_{r=R}}} =q_{i_{|_{r=R}}}.
\label{neumannq}
\end{equation}
We have $\underline{P}_e=P_\mathrm{atm}$, where $P_\mathrm{atm}$ is the constant atmospheric pressure, while $\underline{P}_i$ is given by the following elliptic problem in the interior domain $(0,R)$:
\begin{gather} \begin{cases}
-\left(\partial_r + \dfrac{1}{r}\right)\left(\dfrac{h_w}{\rho}\partial_r \underline{P}_i\right)= \left(\partial_r + \dfrac{1}{r}\right)\left(\partial_r \left(\dfrac{q_i^2}{h_w}\right)+\dfrac{q_i^2}{rh_w}+gh_w\partial_r h_w\right) -\dot{w}_G, \\[10pt]
{\underline{P}_i}_{|_{r=R}}= P_\mathrm{atm}+\rho g (\zeta_e - \zeta_i)_{|_{r=R}} + P_{\mathrm{cor}},
\label{P}
\end{cases}\end{gather}with $P_{\mathrm{cor}}=\dfrac{\rho}{2}q^2_{i_{|_{r=R}}}\left(\dfrac{1}{h^2_e}_{|_{r=R}} -\dfrac{1}{h^2_i}_{|_{r=R}}\right)$. We replace $h_i=\zeta_i + h_0$ with $h_w=\zeta_w+h_0$ due to the contact constraint \eqref{interiorconstraint}. The boundary condition on the pressure is chosen in order to have exact conservation of the energy for the fluid-solid system (see \cite{Bocchi}).\\
The free motion of the solid is described by Newton's law for the conservation of the linear momentum 
 \begin{equation*}
 	m\ddot{\delta}_G(t)=-mg + \int_{r\leq R}^{}\left(\underline{P}_i
-P_\mathrm{atm}\right). \end{equation*}
 Using the elliptic equation \eqref{P} we can formulate the floating structure problem in the axisymmetric case as the following coupled problem (for details see \cite{Bocchi}):

\begin{itemize}
	\item the quasilinear hyperbolic boundary problem for the fluid motion in the exterior domain
\begin{equation}
\begin{cases}
\partial_t h_e +\partial_r q_e +\dfrac{q_e}{r}=0,\\[10pt]
\partial_t q_e + \partial_r \left(\dfrac{q_e^2}{h_e}\right)+\dfrac{q_e^2}{rh_e}+gh_e\partial_r h_e =0,\\[10pt]
{q_e}_{|_{r=R}}=-\frac{R}{2}\dot{\delta}_G,
\end{cases}
\label{systemIntro}
\end{equation}
\item  Newton's equation for the conservation of the linear momentum can be put under the form
\begin{equation}
(m+m_a(\delta_G)) \ddot{\delta}_G(t)= -\mathfrak{c} \delta_G(t) + \mathfrak{c} \zeta_e (t,R) + \left(\mathfrak{b}(h_e)+\beta(\delta_G)\right)\dot{\delta}^2_G(t)
\label{eqmotionIntro}
\end{equation} with \begin{equation} \centering \begin{aligned} &\mathfrak{c}=\rho g \pi R^2,\ \ \ \ \ \mathfrak{b}(h_e)=\dfrac{b}{h_e^2(t,R)} \ \ \ \mbox{with} \ \ \ b=\frac{\pi \rho R^4 }{8},\\[10pt]
&m_a(\delta_G)=\dfrac{\rho \pi}{2}\int_{0}^{R}\dfrac{r^3}{h_w(\delta_G,r)}dr, \\[10pt] &\beta(\delta_G)=\dfrac{b}{2 h_w^2(\delta_G,R)}+\dfrac{\pi \rho}{8}\int_{0}^{R}\dfrac{r^4}{h_w^3(\delta_G,r)}\partial_r h_w(\delta_G,r) \ dr,\end{aligned}\label{ODEparam}\end{equation}
\end{itemize}with $h_w(\delta_G,r)=h_{w,eq}(r) + \delta_G(t)$. Due to this decomposition of $h_w$ and the contact constraint \eqref{interiorconstraint}, we get the boundary condition in \eqref{systemIntro} from \eqref{neumannq} and the explicit resolution of the first equation in \eqref{WWFloatpolar} in the interior domain, that is 
\begin{equation*}
	q_i(t,r)=-\frac{r}{2}\dot{\delta}_G(t).
\end{equation*}
The term $h_{w,eq}(r)$ is the fluid height under the solid at the equilibrium position and $\zeta_{w,eq}(r)=h_{w,eq}(r)-h_0$ is the elevation of the bottom of the solid at the equilibrium position. They both depend on the density of the fluid $\rho$, the density of the solid $\rho_m$, the depth $h_0$ and the height of the solid $H$ (see Section 3 for the explicit expressions in the flat bottom case).

\subsection{Extension-trace operator for the coupling with the exterior domain}
In this section we want to show that, in the ODE for the solid part of the coupled system \eqref{systemIntro} - \eqref{eqmotionIntro}, we can write the coupling term $\zeta_e(t,R)$ (also $h_e^2(t,R)$), the trace of the free surface elevation in the exterior domain at the boundary $r=R$, as an extension-trace operator applied to the trace of the horizontal discharge in the interior domain at the boundary $r=R$, that is $-\frac{R}{2}\dot{\delta}_G$.\\ 
We consider the exterior quasilinear hyperbolic initial boundary value problem \eqref{systemIntro} and using $u=(\zeta_e,q_e)^T$ we can write it as 
\begin{equation}
\begin{cases}
	\partial_t u +A(u)\partial_r u +B(u,r)u=0,\\
	{q_e}_{|_{r=R}}=-\frac{R}{2}\dot{\delta}_G,\\
	u(0)=u_0,
\end{cases}
\label{quasilinearU}
\end{equation} with \begin{equation*}A(u)=\left (
\begin{array}{cc}
0& 1 \\
gh_e-\dfrac{q_e^2}{h_e^2}& \dfrac{2q_e}{h_e}\\
\end{array}
\right ), \ \ \ \  B(u,r)=\left (
\begin{array}{cc}
0& \dfrac{1}{r} \\[10pt]
0& \dfrac{q_e}{rh_e},\\
\end{array}
\right )\end{equation*} and $$u_0=(\zeta_{e,0},q_{e,0})^T.$$
We consider the functional space\begin{equation*}
X^k(T):=\bigcap_{j=0}^k C^j([0,T], H_r^{k-j}((R,+\infty)))$$ endowed with the norm $$\| u\|_{X^k(T)}:=\sup_{[0,T]}\Xnorm{u(t)} \ \ \ , \ \ \  \Xnorm{u(t)}=\sum_{j=0}^{k}\|\partial_t^j u(t)\|_{H_r^{k-j}((R,+\infty))},
\end{equation*}where $H^k_r:=H^k(rdr)$ is the weighted Sobolev space.
In Theorem 5.3 of \cite{Bocchi} we showed that, for $k\geq 2$, there exists $T>0$ and a unique solution $u=(\zeta_e,q_e)^T\in X^k(T)$ to \eqref{quasilinearU}, provided the initial data $u_0\in H^k_r((R,+\infty))$, the boundary datum ${q_e}_{|_{r=R}} \in H^k((0,T))$ and compatibility conditions are satisfied up to order $k-1$. Moreover $u$ satisfies the following energy estimate:
\begin{equation}
\begin{aligned} 
\Xnorm{u(t)}^2 + \normsob{u_{|_{r=R}}}{(0,t)}^2 \leq C\left(T,\|u_0\|^2_{H^k_r((R,\infty))}, \normsob{{q_e}_{|_{r=R}}}{(0,t)}^2\right)
\end{aligned}
\label{aprioriestimateHs}
\end{equation}
for all  $t\in (0,T)$. Then we can define an operator $\mathcal{B}$ such that 
\begin{equation}
\begin{aligned}
\mathcal{B}: \ \ \ H^k((0,T))\times H^k_r((R,\infty))\ \ \ &\rightarrow \ \ \ H^k((0,T))\\
      ( \dot{\delta}_G \ \ \ \ \ ,\ \ \ \ u_0)\ \  \quad\quad \quad  &\mapsto \ \ \  \mathcal{B}\left[ \dot{\delta}_G, u_0\right]={\zeta_e}_{|_{r=R}}.
\end{aligned}
\label{extensiontrace}
\end{equation} We call it an extension-trace operator since it takes the trace of $q_e$, that is $-\frac{R}{2}\dot{\delta}_G$, the initial data $u_0$ and it extends to the couple $(\zeta_e,q_e)$ by solving the initial boundary value problem \eqref{quasilinearU} and then it takes the trace of $\zeta_e$. One can easily note that $\mathcal{B}$ is nonlinear. Then, using the fact that $h_e=\zeta_e+h_0$ and assuming $u_0$ to be given, we can write the equation \eqref{eqmotionIntro} for the solid motion as a second order delay differential equation only in terms of $\delta_G$, namely
\begin{equation}\begin{aligned}
&(m+m_a(\delta_G)) \ddot{\delta}_G(t)\\&= -\mathfrak{c} \delta_G(t) + \mathfrak{c} \mathcal{B}\left[\dot{\delta}_G,u_0\right](t)+ \left(\frac{b}{(\mathcal{B}\left[\dot{\delta}_G,u_0\right](t)+h_0)^2}+\beta(\delta_G)\right)\dot{\delta}^2_G(t).
\end{aligned}
\label{eqextensiontrace}
\end{equation}
 It is a delay differential equation since we need to know $\dot{\delta}_G$ for all $t^\prime\in[0,t]$ in order to know the value of $\mathcal{B}\left[\dot{\delta}_G,u_0\right]$ at time $t$.
 This equation can be solved by a standard fixed point argument. Let us  recall the compatibility conditions on the initial data (see \cite{Schochet}):
 \begin{definition}\label{compcondFS}
 	The data $u_0\in H_r^k((R,+\infty))$, $\delta_0\in\mathbb{R}$ and $\delta_1\in\mathbb{R}$ of the floating structure coupled system \eqref{quasilinearU} - \eqref{eqextensiontrace} satisfy the compatibility conditions up to order $ k-1$ if, for $0\leq j\leq k-1$, the following holds:
\begin{equation*}
\mathbf{e}_2\cdot \mbox{\textquotedblleft} \partial_t^j u_{|_{t=0}}\textquotedblright_{|_{r=R}}= -\frac{R}{2} \mbox{\textquotedblleft} \frac{d^{j+1}}{dt^{j+1}}{\delta_G}_{|_{t=0}}\textquotedblright,
\end{equation*}
where $\mbox{\textquotedblleft} u_{|_{t=0}}\textquotedblright=u_0$, $\mbox{\textquotedblleft} \frac{d}{dt}{\delta_G}_{|_{t=0}}\textquotedblright=\delta_1$, and for $j\geq 1$  $\mbox{\textquotedblleft} \partial_t^j u_{|_{t=0}}\textquotedblright$ and $\mbox{\textquotedblleft} \frac{d^{j+1}}{dt^{j+1}}{\delta_G}_{|_{t=0}}\textquotedblright$ are inductively defined by formally taking $j-1$ time derivatives of system \eqref{quasilinearU} and of \eqref{eqextensiontrace} respectively, and evaluating at $t=0$. For instance, 
\begin{equation*}
	\mbox{\textquotedblleft} \partial_t^1 u_{|_{t=0}}\textquotedblright=-A(u_0)\partial_r u_0- B(u_0,r)u_0,
\end{equation*} 
\begin{equation*}
		\mbox{\textquotedblleft}\frac{d^{2}}{dt^{2}}{\delta_G}_{|_{t=0}}\textquotedblright= \frac{-\mathfrak{c} \delta_0 + \mathfrak{c} \mathcal{B}\left[\dot{\delta}_G,u_0\right]_{|_{t=0}}+ \bigg(\frac{b}{(\mathcal{B}\left[\dot{\delta}_G,u_0\right]_{|_{t=0}}+h_0)^2}+\beta(\delta_0)\bigg)\delta_1^2}{m+m_a(\delta_0)}.
\end{equation*}
 \end{definition}
 \vspace{0.3cm}
Then, we can state the following existence result whose proof (in the case of a solid with a flat bottom) is detailed by the author in \cite{Bocchi} (Theorem 5.3):
\begin{theorem}\label{nonlinearsolideq}
 For $k\geq 2$, let $u_0=(\zeta_{e,0}, q_{e,0})$, $ \delta_0$ and $\delta_1$ satisfy the compatibility conditions in Definition \ref{compcondFS} up to order $k-1$. Assume that there exist some constants $h_\mathrm{m}, c_\mathrm{sub}>0$ such that 
 $$\mbox{for}\quad r\in(R,+\infty) \qquad h_{e,0}(r)\geq h_\mathrm{m}, \quad \left(gh_{e,0} - \frac{q^2_{e,0}}{h^2_{e,0}}\right)(r)\geq c_\mathrm{sub}, $$with $h_{e,0}=h_0 +\zeta_{e,0},$ and that $$\delta_0 > -\inf\limits_{(0,R)}h_{w,eq}.$$Then, there exists $T>0$ such that the Cauchy problem for \eqref{eqextensiontrace} with initial data $$\delta_G(0)=\delta_0, \ \ \ \ \ \ \ \ \dot{\delta}_G(0)=\delta_1,$$ admits a unique solution $\delta_G\in H^{k+1}((0,T))$.
 \end{theorem}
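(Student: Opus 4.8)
The plan is to solve the Cauchy problem for the delay differential equation \eqref{eqextensiontrace} by a contraction mapping argument, following the scheme of the proof of Theorem 5.3 of \cite{Bocchi}. The key structural observation is that \eqref{eqextensiontrace} decouples into a genuine second order ODE for $\delta_G$ once the coupling trace $\zeta_e(t,R)=\mathcal{B}[\dot{\delta}_G,u_0]$ is regarded as a known forcing. I therefore introduce the map $\Phi$ which, to a given $\tilde{\delta}$ with $\tilde{\delta}(0)=\delta_0$, $\dot{\tilde{\delta}}(0)=\delta_1$, associates the unique $\delta_G=\Phi(\tilde{\delta})$ obtained by integrating twice the equation
\begin{equation*}
(m+m_a(\tilde{\delta}))\,\ddot{\delta}_G = -\mathfrak{c}\,\tilde{\delta} + \mathfrak{c}\,\mathcal{B}[\dot{\tilde{\delta}},u_0] + \left(\frac{b}{(\mathcal{B}[\dot{\tilde{\delta}},u_0]+h_0)^2}+\beta(\tilde{\delta})\right)\dot{\tilde{\delta}}^2,
\end{equation*}
subject to $\delta_G(0)=\delta_0$, $\dot{\delta}_G(0)=\delta_1$. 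A fixed point of $\Phi$ is exactly a solution of \eqref{eqextensiontrace}. I would look for it in a small closed ball $B_\rho$ of $H^{k+1}((0,T))$, centred at the second order Taylor polynomial dictated at $t=0$ by the compatibility data of Definition \ref{compcondFS}, and intersected with the affine subspace of functions whose time-jet at $t=0$ up to order $k$ agrees with the prescribed compatibility jet; this last restriction guarantees that the boundary datum $-\tfrac{R}{2}\dot{\tilde{\delta}}$ remains compatible with $u_0$ up to order $k-1$ at every iteration, so that Theorem 5.3 of \cite{Bocchi} is applicable throughout.

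Next I would check that $\Phi$ is well defined and maps $B_\rho$ into itself for $T$ small. Since $\tilde{\delta}$ stays close to $\delta_0$ on $[0,T]$, the hypothesis $\delta_0>-\inf_{(0,R)}h_{w,eq}$ keeps $h_w(\tilde{\delta},r)=h_{w,eq}(r)+\tilde{\delta}$ bounded below by a positive constant, so $m_a(\tilde{\delta})$, $\beta(\tilde{\delta})$ are finite and $m+m_a(\tilde{\delta})\geq m>0$. The boundary datum $-\tfrac{R}{2}\dot{\tilde{\delta}}\in H^k((0,T))$ together with $u_0\in H^k_r((R,+\infty))$ satisfying the subcriticality and positivity conditions $h_{e,0}\geq h_\mathrm{m}$, $gh_{e,0}-q_{e,0}^2/h_{e,0}^2\geq c_\mathrm{sub}$, plus the compatibility jet, lets Theorem 5.3 of \cite{Bocchi} produce $u=(\zeta_e,q_e)\in X^k(T)$ and hence $\mathcal{B}[\dot{\tilde{\delta}},u_0]=\zeta_e|_{r=R}\in H^k((0,T))$, controlled through the energy estimate \eqref{aprioriestimateHs}. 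For $T$ small the trace $h_e(t,R)=\mathcal{B}[\dot{\tilde{\delta}},u_0]+h_0$ stays bounded below, so the denominator does not vanish and the whole right-hand side lies in $H^k((0,T))$; integrating twice yields $\Phi(\tilde{\delta})\in H^{k+1}((0,T))$ with the prescribed initial conditions and, crucially, since $\mathcal{B}[\dot{\tilde{\delta}},u_0]$ and its time-jet at $t=0$ are determined by $u_0$ and the (fixed) jet of the boundary data alone, $\Phi$ preserves both the initial conditions and the compatibility jet, hence maps $B_\rho$ into itself.

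The heart of the argument, and the step I expect to be the main obstacle, is the Lipschitz dependence of the extension-trace operator $\mathcal{B}$ on its first argument. Given two boundary inputs, the associated solutions $u_1,u_2$ of \eqref{quasilinearU} have a difference $w=u_1-u_2$ solving a \emph{linearized} hyperbolic system with boundary datum $-\tfrac{R}{2}(\dot{\tilde{\delta}}_1-\dot{\tilde{\delta}}_2)$ and source terms carrying coefficients built from $u_1,u_2$. Because the system is quasilinear, the energy estimate for $w$ — and in particular for its trace at $r=R$, which is the boundary term appearing in \eqref{aprioriestimateHs} — is not sharp and typically loses one derivative, yielding only
\begin{equation*}
\big\|\mathcal{B}[\dot{\tilde{\delta}}_1,u_0]-\mathcal{B}[\dot{\tilde{\delta}}_2,u_0]\big\|_{H^{k-1}((0,T))} \leq C(T)\,\big\|\dot{\tilde{\delta}}_1-\dot{\tilde{\delta}}_2\big\|_{H^{k-1}((0,T))}.
\end{equation*}
The remaining nonlinear coefficients $m_a(\cdot)$, $\beta(\cdot)$ and $b/(\,\cdot+h_0)^2$ are smooth on the region where $h_w$ and $h_e$ stay positive, hence Lipschitz on $B_\rho$. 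Since $\Phi(\tilde{\delta}_1)$ and $\Phi(\tilde{\delta}_2)$ share the same initial conditions, double integration of the difference of the right-hand sides contributes a factor of order $T$, so that
\begin{equation*}
\big\|\Phi(\tilde{\delta}_1)-\Phi(\tilde{\delta}_2)\big\|_{H^{k}((0,T))} \leq C(T)\,\big\|\tilde{\delta}_1-\tilde{\delta}_2\big\|_{H^{k}((0,T))}
\end{equation*}
with $C(T)<1$ for $T$ small. The derivative loss is handled by the standard device of running the contraction in the weaker norm $H^k$ on the set $B_\rho$, which is bounded in $H^{k+1}$ and complete for the $H^k$ metric.

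By the Banach fixed point theorem $\Phi$ has a unique fixed point $\delta_G\in B_\rho$, which solves \eqref{eqextensiontrace} with $\delta_G(0)=\delta_0$, $\dot{\delta}_G(0)=\delta_1$; uniqueness among \emph{all} solutions follows from a Gronwall argument comparing two solutions in the low norm. Finally, reading the regularity off the equation itself, the right-hand side belongs to $H^{k}((0,T))$, whence in particular $\delta_G\in H^{k+1}((0,T))$, which completes the proof. The only genuinely technical point beyond bookkeeping is the quasilinear difference estimate for $\mathcal{B}$ described above; everything else is the routine verification that the nonlinearities remain smooth and bounded under the stated positivity hypotheses and that the compatibility jet propagates under $\Phi$.
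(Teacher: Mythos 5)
Your proposal is correct and follows essentially the same route as the paper: the paper itself reduces this theorem to a standard fixed point argument (the details being those of Theorem 5.3 of \cite{Bocchi}), built exactly as you do on the quasilinear IBVP well-posedness underlying the extension-trace operator $\mathcal{B}$, with the iteration restricted to candidates whose time-jet at $t=0$ matches the compatibility conditions of Definition \ref{compcondFS} and with the contraction run in a weaker norm to absorb the quasilinear loss of derivative. The technical points you single out (preservation of the compatibility jet under $\Phi$, completeness of the high-norm ball in the low-norm metric, and the smallness in $T$, which indeed comes from the matched jets at $t=0$ rather than from double integration alone) are precisely the ones the cited proof handles, so nothing essential is missing.
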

\subsection{The return to equilibrium configuration}
We want to focus now on a particular configuration of the floating structure problem, the return to equilibrium problem. It consists in dropping the solid, with no initial velocity, into a fluid initially at rest from a non-equilibrium position.
By the definition of this particular configuration, we have specific initial conditions for the coupled problem \eqref{systemIntro} - \eqref{eqmotionIntro}.\\
The initial conditions for the solid equation are $$\delta_G(0)=\delta_0\neq0, \ \ \ \ \ \ \ \ \ \ \  \ \dot{\delta}_G(0)=\delta_1=0,$$ and for the fluid equations are $$h_e(0,r)=h_0, \ \ \ \ \ \ \ \ \ q_e(0,r)=0,$$ for all $r\in(R,+\infty)$. In order to apply the  theory of the initial boundary value problem we need these specific initial data to satisfy the compatibility conditions defined in \cite{Bocchi}. The compatibility conditions of order 0 and 1 are respectively:
\begin{equation*}\begin{aligned}
\bullet &\ q_e(0,R)=-\frac{R}{2}\delta_1,\\\\
\bullet & -\partial_r \left(\frac{q^2_e}{h_e}\right)(0,R)-\frac{1}{R}\frac{q^2_e}{h_e}(0,R)-gh_e(0,R)\partial_r\zeta_e(0,R)\\&=
-\dfrac{R}{2\left(m+m_a(\delta_0)\right)}\left(-\mathfrak{c}\delta_0 + \mathfrak{c}\zeta_e(0,R)+\left(\frac{\mathfrak{b}}{h_e^2(0,R)} +\beta(\delta_0)\right)\delta_1^2\right).
\end{aligned}
\label{1e2comp}
\end{equation*}
Due to the nature of the return to equilibrium configuration, we have 
\begin{equation}
\partial_r \zeta_e(0,R)=0, \ \ \ \ \zeta_e(0,R)=0, \ \ \ \ q_e(0,R)=0.
\label{returnconditions}
\end{equation}Therefore the compatibility condition of order 0 is satisfied but not the one of order 1. 
Then Theorem 5.3 of $\cite{Bocchi}$ cannot be applied since one hypothesis required is that the initial and boundary data must satisfy the compatibility conditions at least up to order $1$. When the compatibility conditions at order 1 are not satisfied, sonic waves propagate (we refer to Métivier \cite{Met91} for the existence of such waves).
\begin{remark}
One can choose a different value for $\delta_1$ in order to satisfy the compatibility conditions and be able to apply the results of Theorem 5.3 in $\cite{Bocchi}$.
\end{remark}
\section{Linear-nonlinear model for floating structures }
The impossibility to apply the theory of initial boundary value problems to the particular configuration of the return to equilibrium brings us to consider a linearization of the equations \eqref{WWFloatpolar} in the exterior domain, which describes the case of small amplitude waves. We generalize however the works by Cummins and other authors in the literature by keeping the nonlinear effects in the interior domain. We only assume that the solid does not touch the bottom of the fluid domain. In this section we introduce the linear-nonlinear model for the floating structure problem, we prove the conservation of the total energy for this model and then we show that with this linear approximation we can write the extension-trace operator $\mathcal{B}[\dot{\delta}_G,u_0]$ (simply written $\mathcal{B}[\dot{\delta}_G]$ from now on) as a linear convolution operator. Then the delay differential equation \eqref{eqextensiontrace} for the solid motion becomes a nonlinear second order integro-differential equation.
\subsection{An energy conserving linear-nonlinear model}
We consider the following linear-nonlinear model for the floating structure problem: 
\begin{itemize}
\item in the exterior domain $(R,+\infty)$ 
 \begin{equation}
 \begin{cases}
 \partial_t \zeta_e +\partial_r q_e +\dfrac{q_e}{r}=0\\[10pt]
 \partial_t q_e + gh_0 \partial_r \zeta_e =0
 \end{cases}
 \label{linearmodel}
 \end{equation}
 \item in the interior domain $(0,R)$
 \begin{equation}
 \begin{cases}
 \partial_t h_i +\partial_r q_i +\dfrac{q_i}{r}=0\\[10pt]
 \partial_t q_i + \partial_r \left(\dfrac{q_i^2}{h_i}\right)+\dfrac{q_i^2}{rh_i}+gh_i\partial_r h_i =-\dfrac{h_i}{\rho}\partial_r\underline{P}_i
 \end{cases}
 \label{nonlinearmodel}
 \end{equation}
\end{itemize}
 and the boundary conditions
 \begin{equation}
 	{q_e}_{|_{r=R}} ={q_i}_{|_{r=R}}
 	\label{asymtransition0}
 \end{equation}
 \begin{equation}
 	{\underline{P}_i}_{|_{r=R}}= P_\mathrm{atm}+\rho g (\zeta_e - \zeta_i)_{|_{r=R}} + P_{\mathrm{cor}}
 	\label{pcor}
 \end{equation}with $P_{\mathrm{cor}}=-\dfrac{\rho}{2}{\dfrac{q_i^2}{h_i^2}}_{|_{r=R}}$.
  As in the full nonlinear case the condition \eqref{asymtransition0} can be written in terms of the solid vertical displacement $\delta_G$ and it becomes 
 \begin{equation}
 {q_e}_{|_{r=R}} =-\frac{R}{2}\dot{\delta}_G.
 \label{asymtransition}
 \end{equation}
 Furthermore we have the conservation of the energy for the new linear-nonlinear model (see \cite{Bocchi} for the conservation of the energy in the full nonlinear model):
\begin{proposition}Let us define the shallow water fluid energy for the linear-nonlinear shallow water equations \eqref{linearmodel} - \eqref{nonlinearmodel}
	\begin{equation}E_{SW}= 2\pi\frac{\rho}{2}g\int_{0}^{+\infty}{\zeta}^2rdr +2\pi\frac{\rho}{2}\int_{0}^{R}\frac{{q_i}^2}{h_i}rdr+2\pi\frac{\rho}{2}\int_{R}^{+\infty}\frac{{q_e}^2}{h_0}rdr\end{equation}
	and the solid energy (only with vertical motion)
	$$E_{sol}=\frac{1}{2}mw_G^2+mgz_G.$$
	\label{conservationenergy} Then the total fluid-structure energy
	$E_{tot}=E_{SW}+E_{sol}$ is conserved, \textit{i.e.} $$\frac{d}{dt}E_{tot}= 0.$$
\end{proposition}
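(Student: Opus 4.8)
The plan is to differentiate $E_{tot}=E_{SW}+E_{sol}$ in time, substitute the evolution equations together with Newton's law, integrate by parts, and check that all bulk terms cancel while the boundary contributions at $r=R$ sum to zero. Splitting the potential part of $E_{SW}$ along the interior/exterior decomposition, I write $E_{SW}=E^{pot}+E^{kin}_i+E^{kin}_e$. I would start with the solid energy: since $w_G=\dot z_G=\dot{\delta}_G$, one has $\frac{d}{dt}E_{sol}=w_G\left(m\ddot{\delta}_G+mg\right)$, and Newton's law $m\ddot{\delta}_G=-mg+\int_{r\le R}\left(\underline{P}_i-P_{\mathrm{atm}}\right)$ turns this into $\frac{d}{dt}E_{sol}=2\pi\,\dot{\delta}_G\int_0^R\left(\underline{P}_i-P_{\mathrm{atm}}\right)r\,dr$.

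For the exterior contribution I would insert the linear system \eqref{linearmodel}. Using $\partial_t\zeta_e=-\frac{1}{r}\partial_r(rq_e)$ and $\partial_t q_e=-gh_0\partial_r\zeta_e$ and integrating $\partial_t E^{pot}_e$ by parts, the two bulk integrals coming from $\partial_t E^{pot}_e$ and $\partial_t E^{kin}_e$ cancel exactly, and the flux at $r=+\infty$ vanishes by integrability of the solution, so the exterior part contributes only the boundary flux $2\pi\rho g R\,\zeta_e(t,R)\,q_e(t,R)$.

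The interior part is the delicate one. Inserting the nonlinear system \eqref{nonlinearmodel}, the hydrostatic term $gh_i\partial_r h_i$ cancels exactly the bulk term coming from $\partial_t E^{pot}_i$ (using $\partial_r\zeta_i=\partial_r h_i$), while the convective terms, combined with the $-q_i^2 h_i^{-2}\partial_t h_i$ piece produced by differentiating $q_i^2/h_i$, recombine into the exact derivative $-\partial_r\!\left(r\,q_i^3/h_i^2\right)$, whose integral is the boundary flux $-\pi\rho R\,q_i^3(t,R)/h_i^2(t,R)$ (the endpoint $r=0$ drops out thanks to the weight $r$ and the regularity $q_i\sim r$). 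The pressure term $-2\pi\int_0^R q_i\,\partial_r\underline{P}_i\,r\,dr$, after one integration by parts and use of the mass equation in the form $\partial_r(rq_i)=-r\,\partial_t h_i$, splits into the boundary term $-2\pi R\,q_i(t,R)\,\underline{P}_i(t,R)$ and a bulk term $-2\pi\int_0^R\underline{P}_i\,\partial_t h_i\,r\,dr$. Here I would exploit the explicit interior solution $q_i=-\frac{r}{2}\dot{\delta}_G$, which through mass conservation forces $\partial_t h_i=\dot{\delta}_G$ uniformly in $r$; the bulk pressure term then becomes $-2\pi\,\dot{\delta}_G\int_0^R\underline{P}_i\,r\,dr$ and cancels precisely the pressure integral coming from $\frac{d}{dt}E_{sol}$, leaving only $-2\pi\,\dot{\delta}_G\,P_{\mathrm{atm}}\int_0^R r\,dr=-\pi R^2 P_{\mathrm{atm}}\dot{\delta}_G$.

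It remains to collect the boundary terms at $r=R$. By the transition condition \eqref{asymtransition}, $q_i(t,R)=q_e(t,R)=-\frac{R}{2}\dot{\delta}_G$, so every surviving term is a multiple of $\pi R^2\dot{\delta}_G$; summing the exterior flux, the interior potential and kinetic fluxes, the interior pressure boundary term, and the leftover $-\pi R^2 P_{\mathrm{atm}}\dot{\delta}_G$ yields $\pi R^2\dot{\delta}_G\left[\underline{P}_i(t,R)-P_{\mathrm{atm}}-\rho g(\zeta_e-\zeta_i)(t,R)+\frac{\rho}{2}\frac{q_i^2}{h_i^2}(t,R)\right]$. Since $\frac{\rho}{2}q_i^2/h_i^2|_{r=R}=-P_{\mathrm{cor}}$, the bracket is exactly the residual of the pressure boundary condition \eqref{pcor} and therefore vanishes, giving $\frac{d}{dt}E_{tot}=0$. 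I expect the main obstacle to be this interior computation: identifying the convective contribution as the exact derivative $\partial_r(r\,q_i^3/h_i^2)$ and, above all, verifying that its boundary value is cancelled term-by-term by the quadratic corrector $P_{\mathrm{cor}}$ — which is precisely the reason the boundary condition \eqref{pcor} was designed with that correction.
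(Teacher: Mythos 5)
Your proof is correct and takes essentially the same approach as the paper: the same energy multipliers produce the local conservation identities (including the key recombination of the convective term into $\partial_r\bigl(\rho q_i^3 r/(2h_i^2)\bigr)$), the bulk pressure term cancels against $\frac{d}{dt}E_{sol}$ through the contact constraint, and the surviving boundary terms at $r=R$ vanish exactly because of the transition condition \eqref{asymtransition0} and the corrector $P_{\mathrm{cor}}$ in \eqref{pcor}. The only differences are bookkeeping: the paper integrates by parts with $\underline{P}_i-P_{\mathrm{atm}}$ and rewrites $\partial_t\zeta_w$ via the mass equation, whereas you keep $\underline{P}_i$, track the leftover $-\pi R^2 P_{\mathrm{atm}}\dot{\delta}_G$ explicitly, and use $\partial_t h_i=\dot{\delta}_G$ — mirror images of the same step.
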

\begin{proof}
	Multiplying the first equation of \eqref{linearmodel} by $\rho g \zeta_e r$,  the second equation by $\dfrac{\rho q_e}{h_0}r$ and summing them together, we have local conservation of the energy, \textit{i.e.}
	\begin{equation}
	\partial_t e_{ext} + \partial_r F_{ext}=0,
	\label{localexteriorenergy}
	\end{equation}
	where $e_{ext}$ is the local fluid energy in the exterior domain $$e_{ext}=\frac{\rho}{2}g\zeta_e^2 r + \frac{\rho}{2}\frac{q_e^2}{h_0}r$$ and $F_{ext}$ is the flux in the exterior domain
	$$F_{ext}=\rho g\zeta_e q_e r.$$ 
	We consider the equations \eqref{nonlinearmodel} in the interior domain. Analogously,
multiplying the first equation by $\rho g \zeta_i r$, the second equation by $\dfrac{\rho q_i}{h_i}r$ and summing them together, 
we obtain
	\begin{equation}
	\partial_t e_{int} +\partial_r F_{int}= - r q_i \partial_r\underline{P}_i , 
	\label{localinteriorenergy}
	\end{equation}where $e_{int}$ is the local fluid energy in the interior domain $$e_{int}=\frac{\rho}{2}g\zeta_i^2 r + \frac{\rho}{2}\frac{q_i^2}{h_i}r$$ and $F_{int}$ is the flux in the interior domain
	$$F_{int}=\dfrac{\rho q_i^3}{2h_i^2}r+\rho g\zeta_i q_i r.$$For the sake of clarity we remark that, in order to derive $\partial_t e_{int} $ in \eqref{localinteriorenergy}, we have used the following identity for the convective term:
	 \begin{align*}\left(\partial_r \left(\frac{q_i^2}{h_i}\right)+ \frac{q_i^2}{r h_i}\right)\left(\frac{\rho q_i}{h_i}r\right)&= \frac{\rho q_i^2}{2h_i^2}r \left(\partial_r q_i + \frac{q_i}{r}\right)+ \partial_r \left(\frac{\rho q_i^3}{2h_i^2}r\right)\\&=-\frac{\rho q_i^2}{2h_i^2}r\partial_t \zeta_i +\partial_r \left(\frac{\rho q_i^3}{2h_i^2}r\right), \end{align*}where the last equality is due to the first equation in \eqref{nonlinearmodel} and to the fact that $\partial_t h_i=\partial_t \zeta_i$.
	We integrate \eqref{localexteriorenergy} on $[R, +\infty)$ and \eqref{localinteriorenergy} on $[0,R]$ and multiplying them by $2\pi$ we obtain 
	\begin{equation}
	\frac{d}{dt} E_{SW} - 2\pi\rho R g \left\llbracket \zeta q \right\rrbracket + 2\pi \rho R {\dfrac{q_i^3}{2h_i^2}}_{|_{r=R}}= -2\pi\int_{0}^{R}rq_i\partial_r\left(\underline{P}_i -P_\mathrm{atm}\right)dr, 
	\end{equation}where $\left\llbracket f \right\rrbracket$ is the jump of a function $f$ at the boundary $r=R$ defined as
	$$\left\llbracket f \right\rrbracket:= {f_e}_{|_{r=R}} -{f_i}_{|_{r=R}}.$$By integration by parts we get 
	\begin{equation}\begin{aligned}
	\frac{d}{dt} E_{SW} = & \ 2\pi\rho R g\left\llbracket \zeta q\right\rrbracket -2\pi \rho R {\dfrac{q_i^3}{2h_i^2}}_{|_{r=R}} -2\pi R \left(\underline{P}_i -P_\mathrm{atm}\right)_{|_{r=R}}q_{i_{|_{r=R}}}\\&+2\pi\int_{0}^{R}\left(\underline{P}_i -P_\mathrm{atm}\right)\partial_r(rq_i)dr.
	\end{aligned}
	\end{equation}
	On the other hand, from the definition of $E_{sol}$, we have 
	\begin{equation*}
	\begin{aligned}
	\frac{d}{dt}E_{sol}&= mw_G\dot{w}_G + mgw_G= w_G\left(m\dot{w}_G + mg\right)\\&=w_G \ 2\pi\int_{0}^{R}\left(\underline{P}_i -P_\mathrm{atm}\right)rdr\\&=2\pi\int_{0}^{R}\left(\underline{P}_i -P_\mathrm{atm}\right)\partial_t\zeta_wrdr,
	\end{aligned}
	\end{equation*} where we used Newton's law for the conservation of the linear momentum and, since the structure moves only vertically,  
	$$\partial_t \zeta_w = w_G,$$ coming from standard solid mechanics. From the contact constraint \eqref{interiorconstraint} and the mass conservation equation in \eqref{nonlinearmodel} we get
	\begin{equation}
	\frac{d}{dt}E_{sol}=-2\pi\int_{0}^{R}\left(\underline{P}_i -P_\mathrm{atm}\right)\partial_r(rq_i )dr.
	\end{equation}Therefore 
	$$\frac{d}{dt}E_{SW}= -\frac{d}{dt}E_{sol} +2\pi\rho R g\left\llbracket \zeta q\right\rrbracket -2\pi \rho R {\dfrac{q_i^3}{2h_i^2}}_{|_{r=R}} -2\pi R \left(\underline{P}_i -P_\mathrm{atm}\right)_{|_{r=R}}q_{i_{|_{r=R}}}.$$	
	Using the expression of the interior pressure $\underline{P}_i$ on the boundary $r=R$ in \eqref{pcor} and the transition condition \eqref{asymtransition0} we get the conservation of the total energy. 
\end{proof} 

 \subsection{Linear equations in the exterior domain} 
In this subsection we focus on the linear shallow water equations in the exterior domain
\begin{equation}
\begin{cases}
\partial_t \zeta_e +\partial_r q_e +\dfrac{q_e}{r}=0,\\[10pt]
\partial_t q_e +v_0^2\partial_r \zeta_e =0,\\
\end{cases}
\label{extlinshallow}
\end{equation}with $v_0= \sqrt{gh_0}$, coupled with the transition condition
\begin{equation}
{q_e}_{|_{r=R}}=-\frac{R}{2}\dot{\delta}_G(t).
\label{lintransition}
\end{equation}
\noindent Taking the derivative of the first equation in \eqref{extlinshallow} with respect to time and replacing the value of $\partial_t q_e$ with the expression in the second equation we find the linear wave equation
$$\partial_{tt} \zeta_e -v_0\Delta_r \zeta_e=0$$ with $\Delta_r:=\partial_{rr} +\dfrac{1}{r}\partial_r$.\\
We consider only positive time $t$ (we can treat $\zeta_e$ as a causal function, i.e. $\zeta_e=0$ for $t <0$). In the same way as John did in \cite{John}, we apply the Laplace transform 
$$\laplace{\zeta_e}(r,s)=\int_{0}^{+\infty}\zeta_e(t,r)e^{-st}dt \quad  \mbox{for}\quad \Re(s)>0 $$
 to the wave equation and we get the following Helmholtz equation with complex coefficients:
\begin{equation}s^2\laplace{\zeta_e} - v_0\Delta_r \laplace{\zeta_e}=0.\label{helmholtz}\end{equation} 
We have $\laplace{\partial_{tt}\zeta_e}=s^2\laplace{\zeta_e}+\partial_t \zeta_e(0) +s\zeta_e(0)$ but in this configuration we have in addition $\partial_t \zeta_e(0)=0$ and $\zeta_e(0)=0$ from \eqref{returnconditions}.
The general solution of \eqref{helmholtz} is $$\laplace{\zeta_e}(r,s)=a_1(s)H_0^{(1)}\left(\frac{isr}{v_0}\right)+ a_2(s)H_0^{(2)}\left(\frac{isr}{v_0}\right),$$ where $H_0^{(1)}$ and $H_0^{(2)}$ are the Hankel functions of first order and second order respectively with index 0. 

\begin{remark}
Let us consider the Bessel functions of the first kind and of the second kind, respectively $J_n$ and $Y_n$, solutions to 
\[z^{2}\dfrac{{\mathrm{d}}^{2}w}{{\mathrm{d}z}^{2}}+z\dfrac{\mathrm{d}w}{\mathrm{d%
	}z}+(z^{2}-n^{2})w=0, \ \ \ z\in\mathbb{C}.\]
The Hankel functions of first order with index $n$ are defined as
\begin{equation*}
	H_n^{(1)}= J_n + i Y_n,
\end{equation*}and the Hankel functions of second order with index $n$ as
\begin{equation*}
H_n^{(2)}= J_n - i Y_n.
\end{equation*}
\end{remark}

\noindent From the asymptotic behaviour of the Hankel functions (see Appendix A) we know that $H_0^{(1)}(z)\sim \sqrt{\dfrac{2}{\pi z}}e^{iz}$ and $H_0^{(2)}(z)\sim \sqrt{\dfrac{2}{\pi z}}e^{-iz}$ for large $|z|$ and $0<\arg z <\pi.$ Therefore for large $|s|r$ and $-\dfrac{\pi}{2}<\arg s < \dfrac{\pi}{2}$
$$H_0^{(1)}\left(\frac{isr}{v_0}\right)\sim \sqrt{\dfrac{2v_0}{\pi isr}}e^{\tfrac{-sr}{v_0}},$$ $$H_0^{(2)}\left(\dfrac{isr}{v_0}\right)\sim \sqrt{\dfrac{2v_0}{\pi isr}}e^{\tfrac{sr}{v_0}}.$$ Thus for $\Re (s)>0$ and large $r$ $$a_1(s)H_0^{(1)}\left(\frac{isr}{v_0}\right)e^{st}\sim a_1(s) \sqrt{\dfrac{2v_0}{\pi isr}}e^{s\left(t-\tfrac{r}{v_0}\right)},$$$$a_2(s)H_0^{(2)}\left(\frac{isr}{v_0}\right)e^{st}\sim a_2(s) \sqrt{\dfrac{2v_0}{\pi isr}}e^{s\left(t+\tfrac{r}{v_0}\right)}.$$These terms represent respectively an outgoing progressive wave and an incoming progressive wave. Since in this problem we consider only outgoing waves, we impose $a_2(s)=0$. \\
Applying the Laplace transform to the second equation of \eqref{extlinshallow}, we get the following boundary condition for the exterior Helmholtz problem:
$$\partial_r \laplace{\zeta_e}_{|_{r=R}}= -\frac{s}{v_0^2}\laplace{q_e}_{|_{r=R}}=\dfrac{sR}{2v_0^2}\laplace{\dot{\delta}_G},$$ 
using the transition condition \eqref{lintransition}. Therefore we finally have
\begin{equation} \laplace{\zeta_e}(s,R)=\dfrac{iR H_0^{(1)}\left(\dfrac{is R}{v_0}\right)}{2v_0H_1^{(1)}\left(\dfrac{is R}{v_0}\right)}\laplace{\dot{\delta}_G}(s),
\label{zetalaplace}
\end{equation} using the relation $(H_0^{(1)})^\prime= - H_1^{(1)}$ between the derivative of $H_0^{(1)}$ and the Hankel function of first order with index 1.
From Appendix A we have $\dfrac{H_0^{(1)}(s)}{H_1^{(1)}(s)}\rightarrow i$ for large $|s|$. Adding and subtracting this limit we have
\begin{equation}
	\laplace{\zeta_e}(s,R)= f(s)\laplace{\dot{\delta}_G}(s) -\dfrac{R}{2v_0}\laplace{\dot{\delta}_G}(s)
	\label{zetahat}
\end{equation}
with $$f(s)= \dfrac{iR H_0^{(1)}\left(\dfrac{is R}{v_0}\right)}{2v_0H_1^{(1)}\left(\dfrac{is R}{v_0}\right)}+\dfrac{R}{2v_0}$$
with $f(s) \rightarrow 0$ as $|s| \rightarrow+\infty$. It turns out that we can write $f$ as a Laplace transform of some function: 
\begin{lemma}\label{F}
There exists a unique function $F\in L^2\left(\mathbb{R}_+\right)\cap C\left([0,+\infty)\right)$ such that $f(s)= \laplace{F}(s)$, with 
\begin{equation*}
	F(t)= \lim\limits_{v\rightarrow +\infty}\frac{1}{2\pi}\int_{-v}^{v}f(c+i\omega)e^{(c+i\omega) t}d\omega,
\end{equation*}
independent of $c>0$, in the sense of $L^2$-Fourier transform and
\begin{equation*}
F(t)= \frac{1}{2\pi}\int_{-\infty}^{+\infty}\left[f(c+i\omega)- \frac{\lambda}{c+i\omega}\right]e^{(c+i\omega) t}d\omega + \lambda,
\end{equation*}
with $\lambda=\frac{1}{4}$, in the sense of Lebesgue integral.
\end{lemma}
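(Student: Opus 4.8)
The plan is to recognise $f$ as an element of the Hardy space $H^2$ of the right half-plane $\mathbb{C}_+=\{\Re s>0\}$, invoke the Paley--Wiener theorem to produce $F\in L^2(\mathbb R_+)$ with $f=\laplace{F}$, and then treat the borderline decay $f(s)\sim\tfrac1{4s}$ by a subtraction in order to upgrade to continuity and the absolutely convergent representation. First I would record, using the Hankel function properties of Appendix A, that $f$ is holomorphic on $\mathbb{C}_+$: for $s=c+i\omega$ with $c>0$ the argument $isR/v_0$ has imaginary part $cR/v_0>0$, so it lies in the open upper half-plane, where $H_0^{(1)}$ and $H_1^{(1)}$ are holomorphic and, crucially, $H_1^{(1)}$ has no zeros (Hankel functions of the first kind do not vanish in the closed upper half-plane). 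Hence the quotient in $f$, and therefore $f$ itself, is holomorphic on $\mathbb{C}_+$.

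The second step is the sharp large-argument analysis, which is also where the value $\lambda=\tfrac14$ originates. Inserting $H_0^{(1)}(z)=\sqrt{2/(\pi z)}\,e^{i(z-\pi/4)}\bigl(1-\tfrac{i}{8z}+O(z^{-2})\bigr)$ and $H_1^{(1)}(z)=\sqrt{2/(\pi z)}\,e^{i(z-3\pi/4)}\bigl(1+\tfrac{3i}{8z}+O(z^{-2})\bigr)$ yields $H_0^{(1)}(z)/H_1^{(1)}(z)=i+\tfrac{1}{2z}+O(z^{-2})$, and with $z=isR/v_0$ the leading terms cancel against the additive $\tfrac{R}{2v_0}$ in $f$, leaving $f(s)=\tfrac{1}{4s}+O(s^{-2})$ as $|s|\to\infty$, uniformly in the half-plane. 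Together with the small-argument behaviour $H_0^{(1)}(z)/H_1^{(1)}(z)=O(z\log z)\to0$, which keeps $f$ bounded near $s=0$, this gives $\sup_{c>0}\int_{\mathbb R}|f(c+i\omega)|^2\,d\omega<\infty$, i.e. $f\in H^2(\mathbb{C}_+)$. The Paley--Wiener theorem then provides a unique $F\in L^2(\mathbb R_+)$ with $f=\laplace{F}$ and with the $L^2$-convergent Bromwich representation independent of $c>0$; uniqueness also follows from injectivity of the Laplace transform. This settles the $L^2$ statement and the first formula.

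For continuity and the second representation I would subtract the leading term, writing $f(s)=\tfrac{\lambda}{s}+g(s)$ with $\lambda=\tfrac14$ and $g(s)=f(s)-\tfrac{\lambda}{s}=O(s^{-2})$. On each vertical line $\Re s=c>0$ the map $\omega\mapsto g(c+i\omega)$ is continuous and $O(\omega^{-2})$, hence lies in $L^1(\mathbb R)$, so $G(t):=\tfrac{1}{2\pi}\int_{\mathbb R}g(c+i\omega)e^{(c+i\omega)t}\,d\omega$ is an absolutely convergent Lebesgue integral defining a continuous function; holomorphy of $g$ on $\mathbb{C}_+$ with its $O(s^{-2})$ decay makes $G$ independent of $c$ by Cauchy's theorem. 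Since $\tfrac{\lambda}{s}=\laplace{\lambda}$ in the classical sense, linearity of the inverse Laplace transform gives $F=\lambda+G$ almost everywhere on $(0,+\infty)$, which is precisely the claimed formula and exhibits a continuous representative, so $F\in C([0,+\infty))$; the identification with the Paley--Wiener $F$ shows a posteriori that $\lambda+G\in L^2(\mathbb R_+)$.

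The step I expect to be the main obstacle is the uniform asymptotic control in the second paragraph: one must justify $f(s)=\tfrac{1}{4s}+O(s^{-2})$ uniformly up to the imaginary axis, where the Hankel argument reaches the negative real axis and its branch cut, and near the logarithmic branch point at $s=0$, so that the $H^2$ bound and the $L^1$ bound for $g$ on vertical lines are genuinely uniform in $c$ as $c\to0^+$. This rests entirely on the quantitative Hankel estimates of Appendix A; once those are in hand, the Paley--Wiener identification and the subtraction argument are routine.
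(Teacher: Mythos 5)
Your proof is correct and its skeleton coincides with the paper's: the same subtraction $f(s)=\frac{1}{4s}+g(s)$ with $\lambda=\frac14$ read off from the Hankel asymptotics, the Paley--Wiener theorem for the $L^2$ part, and absolute integrability of $g$ on vertical lines for the continuous representation, with the two pieces glued by uniqueness of the Laplace transform. The one genuine divergence is the step establishing $f\in H^2(\mathbb{C}_+)$. You verify the defining bound $\sup_{c>0}\int_{\mathbb{R}}|f(c+i\omega)|^2\,d\omega<\infty$ directly, which is why you must worry about the expansion $f(s)=\frac{1}{4s}+O(s^{-2})$ holding uniformly up to the imaginary axis; the paper avoids this issue by first noting $f\in H^\infty(\mathbb{C}_+)$ (holomorphy, decay at infinity, and boundedness near $i\mathbb{R}$ from $H_0^{(1)}(is)/H_1^{(1)}(is)\sim -is\log(is)$ as $s\to 0$) together with $f_{|_{i\mathbb{R}}}\in L^2$, and then invoking the Smirnov theorem to upgrade $H^\infty$ with $L^2$ boundary trace to $H^2(\mathbb{C}_+)$, so that quantitative decay is needed only on the single line $\mathrm{Re}\,s=0$. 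Your more elementary route (no Nevanlinna--Smirnov theory) does close, because the asymptotic expansions of Appendix A are uniform on closed subsectors of $-\pi<\arg z<2\pi$ and $\arg(isR/v_0)\in[0,\pi]$ for $s\in\overline{\mathbb{C}_+}$, but that uniformity should be stated as a fact rather than left as a concern. Finally, where you reprove the inversion of the integrable part (continuity by dominated convergence, independence of $c$ by Cauchy's theorem, and, implicitly, $\laplace{G}=g$), the paper simply cites Lemma 3.9 of Rognlie; both are legitimate, and your explicit asymptotic cancellation producing $\lambda=\frac14$ is in fact more detailed than the paper's, which asserts the expansion without computation.
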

\begin{proof} We know that both $H_0^{(1)}(is)$, $H_1^{(1)}(is)$ are holomorphic functions on $\mathbb{C}_+$, and $H_1^{(1)}(is)\neq0$ in $\mathbb{C}_+$ (see \cite{AbraStegun},\cite{NIST:DLMF}), then $f(s)$ is holomorphic on $\mathbb{C}_+$. Moreover $f$ is bounded in $\mathbb{C}_+$ since  $f\rightarrow 0$ at infinity and $f$ is bounded around the boundary $i\mathbb{R}$ (from Appendix A we have $\frac{H_0^{(1)}(is)}{H_1^{(1)}(is)} \sim -is\log(is)$ for $s\rightarrow 0$). Hence $f\in H^\infty(\mathbb{C}_+)$. Now we want to show that $f\in L^2(i\mathbb{R})$: $f$ is defined 
also in $\overline{\mathbb{C}_+}$ if we consider the one-valued functions $H_0^{(1)}$ and $H_1^{(1)}$ (considering the one-valued logarithm in the definition of the Hankel functions in  Appendix A). Moreover we have that \begin{equation}f(s)=\dfrac{1}{4s}+ O\left(\dfrac{1}{s^2}\right)\label{infinitybeh}\end{equation} as $|s|\rightarrow +\infty$, hence
\begin{equation*}
	\int_{-\infty}^{+\infty}|f(i\omega)|^2d\omega < +\infty.
\end{equation*}
Therefore by the Smirnov theorem (see \cite{Nikolski}) $f\in H^2(\mathbb{C}_+)$, where $H^2\left(\mathbb{C}_+\right)$ is the so-called Hardy space, and by the Paley-Wiener theorem (see \cite{Harper, Yosida}) there exists a unique function $F\in L^2\left(\mathbb{R}_+\right)$ such that $\laplace{F}(s)=f(s)$ with 
	\begin{equation*}F(t)= \lim\limits_{v\rightarrow +\infty}\frac{1}{2\pi}\int_{-v}^{v}f(c+i\omega)e^{(c+i\omega)t}d\omega
\end{equation*}is to be understood in the sense of $L^2$ Fourier transforms for any $c>0$. On the other hand, from \eqref{infinitybeh} we have 
$g(s)=f(s)-\frac{1}{4s}$ is Lebesgue integrable on the line $\mathrm{Re} s=c$ for any $c>0$. From Lemma 3.9. of \cite{Rognlie} there exists a function $\widetilde{F}\in C([0,+\infty))$ such that $\laplace{\widetilde{F}}(s)=g(s)$, with 
$$\widetilde{F}(t)= \frac{1}{2\pi}\int_{-\infty}^{+\infty}\left[f(c+i\omega)- \frac{\lambda}{c+i\omega}\right]e^{(c+i\omega) t}d\omega$$ independent of $c>0.$  Hence, writing $f(s)=g(s) + \frac{1}{4s}$ and using the fact that $\laplace{\lambda}=\frac{\lambda}{s}$ for all complex constant $\lambda$, we have that $\laplace{F}(s)=f(s)$ with
$$F(t)= \frac{1}{2\pi}\int_{-\infty}^{+\infty}\left[f(c+i\omega)- \frac{\lambda}{c+i\omega}\right]e^{(c+i\omega) t}d\omega + \lambda$$
and $\lambda=\frac{1}{4}.$
\end{proof}
\noindent Then we can write the coupling term with the fluid motion $\zeta_e(t,R)$ as an explicit function of the solid velocity $\dot{\delta}_G$ under convolution form:
\begin{proposition}\label{linzeta}
	Considering the linearized shallow water equations in the exterior domain, the following holds:
	\begin{equation}\zeta_e(t,R)=\int_{0}^{t}F(s)\dot{\delta}_G(t-s)ds - \dfrac{R}{2v_0}\dot{\delta}_G(t)
	\label{linzetae}
	\end{equation} with $F(t)$ as in Lemma \ref{F}.
\end{proposition}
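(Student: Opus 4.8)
The plan is to obtain \eqref{linzetae} simply by inverting the Laplace-transform identity already established in \eqref{zetahat}. From that relation we have, for $\Re(s)>0$,
\[
\laplace{\zeta_e}(s,R)= f(s)\laplace{\dot{\delta}_G}(s) -\dfrac{R}{2v_0}\laplace{\dot{\delta}_G}(s),
\]
and Lemma \ref{F} furnishes a function $F\in L^2(\mathbb{R}_+)\cap C([0,+\infty))$ with $f(s)=\laplace{F}(s)$. Substituting this into the first summand, the entire right-hand side becomes an expression built only from Laplace transforms, so the proof reduces to recognizing the corresponding time-domain functions and invoking injectivity of the Laplace transform.

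For the first summand I would use the convolution theorem: since $f=\laplace{F}$, the product $\laplace{F}(s)\laplace{\dot{\delta}_G}(s)$ is the Laplace transform of the causal convolution
\[
(F*\dot{\delta}_G)(t)=\int_{0}^{t}F(s)\dot{\delta}_G(t-s)\,ds.
\]
The second summand is already of the form $\laplace{\tfrac{R}{2v_0}\dot{\delta}_G}(s)$. Hence the right-hand side of \eqref{zetahat} coincides with $\laplace{(F*\dot{\delta}_G)-\tfrac{R}{2v_0}\dot{\delta}_G}(s)$, and by injectivity of the Laplace transform on causal functions the two originals must agree, which is exactly \eqref{linzetae}.

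The step requiring genuine care is the application of the convolution theorem and the subsequent inversion, which demand that both factors be Laplace transformable in a common right half-plane and that the convolution integral be well defined. The factor $F$ is controlled by Lemma \ref{F}, where $F\in L^2(\mathbb{R}_+)$ and $f\in H^2(\mathbb{C}_+)$; the factor $\dot{\delta}_G$ requires sufficient regularity and at most exponential growth, which in the return to equilibrium setting is guaranteed by the causality and the initial condition $\dot{\delta}_G(0)=\delta_1=0$ together with the a posteriori regularity of the solid trajectory. Under these hypotheses the convolution $F*\dot{\delta}_G$ is continuous in $t$, the interchange of integrations underlying the convolution theorem is justified by Fubini's theorem, and the Laplace inversion is legitimate. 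I expect this functional-analytic bookkeeping, rather than the algebraic identification of the transforms, to be the main obstacle, and it is where the $L^2$ and Hardy-space properties from Lemma \ref{F} are essential.
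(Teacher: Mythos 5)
Your proof is correct and takes essentially the same route as the paper: Proposition \ref{linzeta} is stated there as the immediate inverse-Laplace-transform consequence of \eqref{zetahat} together with Lemma \ref{F}, which is precisely your argument via the convolution theorem and injectivity of the Laplace transform. The functional-analytic caveats you flag (Laplace transformability of $\dot{\delta}_G$, Fubini for the convolution) are consistent with the paper's implicit treatment and do not change the argument.
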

	\begin{remark}
		It is easy to see that there exists a function $F_0$ such that 
		$$F(t)=F_0\left(\frac{v_0}{R}t\right)$$ with 
		$$\mathcal{L}(F_0)(s)=\dfrac{i H_0^{(1)}\left(is \right)}{2H_1^{(1)}\left(is \right)}+\dfrac{1}{2}=: f_0(s).$$
	\end{remark}
Now we show the long time behaviour of $F$, which due to the previous remark is is independent on the choice of the parameters $v_0$ and $R$.
\begin{proposition}\label{t-2decrease}
		$F$ has a long time polynomially decaying behaviour. In particular there exist $M>0$ such that $$|F(t)|\leq M \,(1+t)^{-2} $$for all $t\geq0.$ 
\end{proposition}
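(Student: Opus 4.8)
The plan is to reduce the statement to the normalized profile $F_0$ and then extract the decay rate from the singularity of its Laplace transform $f_0$ at the origin. By the scaling relation $F(t)=F_0\!\left(\tfrac{v_0}{R}t\right)$ from the preceding remark, and since $(1+\tfrac{v_0}{R}t)^{-2}$ and $(1+t)^{-2}$ are comparable up to a constant depending only on $v_0/R$, it is enough to prove $|F_0(\tau)|\le M_0(1+\tau)^{-2}$ for the function with $\laplace{F_0}(s)=f_0(s)=\tfrac{i}{2}H_0^{(1)}(is)/H_1^{(1)}(is)+\tfrac12$. On any bounded interval $[0,\tau_0]$ this is immediate, since $F_0$ is continuous by Lemma \ref{F} while $(1+\tau)^{-2}$ is bounded below there; the whole content lies in the regime $\tau\to+\infty$.

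First I would represent $F_0$ by the Laplace inversion integral $F_0(\tau)=\frac{1}{2\pi i}\int_{c-i\infty}^{c+i\infty}f_0(s)e^{s\tau}\,ds$ and deform the Bromwich line onto a contour hugging the branch cut that the Hankel functions impose at the origin, the cut being placed along $\mathbb{R}_{<0}$. This deformation is legitimate because $f_0$ is holomorphic on $\mathbb{C}_+$ (Lemma \ref{F}), continues analytically across $i\mathbb{R}\setminus\{0\}$, and, by the large-argument asymptotics of $H_0^{(1)},H_1^{(1)}$ recalled in Appendix A, obeys $f_0(s)=\tfrac{1}{4s}+O(s^{-2})$, so the connecting arcs at infinity are annihilated by $e^{s\tau}$ once $\Re s<0$. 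Since $H_1^{(1)}(w)\neq0$ for $\Im w\ge0$, i.e. for $\Re s\ge0$, the only poles of $f_0$ sit in the open left half-plane, away from $i\mathbb{R}$; by the residue theorem they add only exponentially small terms $O(e^{-\delta\tau})$, negligible against $\tau^{-2}$.

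The leading behaviour then comes solely from a neighbourhood of $s=0$. Using the small-argument expansion $H_0^{(1)}(is)/H_1^{(1)}(is)\sim-is\log(is)$ from Appendix A (already invoked in the proof of Lemma \ref{F}), one obtains $f_0(s)=\tfrac12+\tfrac12 s\log(is)+O(s^2\log s)$. The analytic part contributes no jump across the cut, hence nothing for $\tau>0$, whereas the jump of $\tfrac12 s\log(is)$ across $\mathbb{R}_{<0}$ is a constant multiple of $s$; parametrising $s=-x$ with $x>0$, the wrapped integral collapses to $\frac{1}{2\pi i}\int_0^{\infty}(-\pi i\,x)e^{-x\tau}\,dx=-\frac{1}{2\tau^2}$, which is exactly the announced rate. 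The higher-order term $O(s^2\log s)$ yields, by the same computation, a contribution of size $O(\tau^{-3}\log\tau)$, subdominant to $\tau^{-2}$.

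The main obstacle is the rigorous bookkeeping of all the non-leading pieces together with the justification of the deformation: one has to check from the Appendix A asymptotics that $f_0$ and the full integrand decay fast enough on the large arcs, pin down the branch cut and verify that the finitely many resonant poles (zeros of $H_1^{(1)}(is)$) are genuinely separated from $i\mathbb{R}$, and bound the integral over the tails of the cut uniformly in $\tau$. Collecting the $-\tfrac{1}{2\tau^2}$ main term, the $O(\tau^{-3}\log\tau)$ branch-cut remainder, the exponentially small residues, and the boundedness of $F_0$ on $[0,\tau_0]$ produces a constant $M_0$ with $|F_0(\tau)|\le M_0(1+\tau)^{-2}$ for all $\tau\ge0$; undoing the scaling then gives the claim.
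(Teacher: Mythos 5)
Your local analysis at the origin is the correct mechanism and coincides with the paper's: the jump of $f_0$ across the cut near $s=0$ is $-\pi i x+O(x^2)$ (the paper derives exactly this from the continuation formulas), and integrating it against $e^{xt}$ produces the $t^{-2}$ rate (your leading constant has the opposite sign to the paper's $+\tfrac{1}{2t^2}$, an orientation slip in the wrapped contour, but that is immaterial for the bound). The genuine gap is in the global deformation. You push the Bromwich line onto the whole half-line $\mathbb{R}_{<0}$ and dispose of the zeros of $H_1^{(1)}(is)$ with ``by the residue theorem they add only exponentially small terms'', later calling them ``finitely many resonant poles''. Neither claim is justified, and this is exactly where the difficulty of the statement sits. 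The function $H_1^{(1)}(w)$ has \emph{infinitely many} zeros, asymptotic to the horizontal line $\Im w=-\tfrac{1}{2}\log 2$ just below the negative real $w$-axis; under $w=is$ they accumulate at infinity along a vertical line $\Re s\approx-\tfrac{1}{2}\log 2$ in the left half-plane. Moreover at any such zero $w_k$ the residue of $f_0$ is exactly $\tfrac12$, since $H_1^{(1)\prime}(w)=H_0^{(1)}(w)-H_1^{(1)}(w)/w$ equals $H_0^{(1)}(w_k)$ there. So if these points are poles of the branch your deformation sweeps (this is the reading the paper takes, citing that all zeros of $H_1^{(1)}(is)$ have real part $\leq-\alpha<0$), the residue correction is $\tfrac12\sum_k e^{s_k\tau}$ over infinitely many terms of equal modulus $\approx e^{-\tau\log 2/2}$: this series is not absolutely convergent, and for suitable $\tau$ (e.g.\ $\tau$ an even integer, when the phases $e^{i\Im s_k\tau}$ essentially align) its partial sums grow, so the step ``the poles add only $O(e^{-\delta\tau})$'' fails as written. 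The alternative reading --- that continuing $f_0$ across $i\mathbb{R}$ lands on the branch where $H_1^{(1)}(is)$ is a constant multiple of $K_1(s)$, which is zero-free on $|\arg s|<\pi$, so that your deformation crosses \emph{no} poles at all --- would make the wrap-around legitimate, but that zero-freeness is a nontrivial Bessel-function fact which your proof neither states nor proves, and it cannot be replaced by the vague ``finitely many poles separated from $i\mathbb{R}$''.

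This is precisely why the paper's proof is more modest: it deforms only to the line $\Re s=-\varepsilon$ with $\varepsilon$ \emph{smaller than the gap} $\alpha$, wrapping around just the segment $[-\varepsilon,0]$ of the cut, so that Cauchy's theorem applies with no residues whatsoever; the two vertical half-lines contribute exponentially decaying terms once the $\tfrac{1}{4s}$ part of $f_0$ is split off (its contribution is shown to vanish by a separate symmetric semicircle computation), and the short cut segment yields $\tfrac{1}{2t^2}+O(t^{-3})$ by the same jump expansion you use. Note also that your remainder estimate relies on $f_0(s)=\tfrac12+\tfrac12 s\log(is)+O(s^2\log s)$, which is valid only near $s=0$: on the unbounded part $x\leq-\varepsilon$ of your cut integral you would additionally need decay of the jump in $x$ (it does decay like $e^{-2|x|}$, via the Wronskian identity $J_0Y_1-Y_0J_1=-2/(\pi w)$, which is also how the paper's explicit jump formula arises), a step you flag as ``bookkeeping'' but never carry out. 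In sum: the leading-order computation is right, but the deformation on which your whole argument rests is unjustified, and the specific justification you offer for it is incorrect.
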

\begin{proof}Without loss of generality we prove the lemma for $F_0$, which is a function independent of the parameters $v_0$ and $R$. As said before, $F(t)=F_0(\frac{v_0}{R}t)$ and the asymptotic behaviour of $F$ is the same as the one of $F_0$.\\First, let us recall that $F_0$ is defined as the inverse Laplace transform of $f_0$ that has a branch cut on the real negative semi-axis by choosing the value of the complex logarithm that has a branch cut in the lower imaginary semi-axis in the series expansion of the Bessel function $Y_n$ for $n=0,1$ in Appendix A. Let us consider the closed curve (see Figure \ref{contour})

	$$\mathcal{C} = \gamma^{-L, -\delta_1}_{-\varepsilon} \vee \gamma^{-\delta_1}_{-\varepsilon, \delta_2} \vee \gamma_{\delta_2}^{-\delta_1, \delta_1} \vee -\gamma^{\delta_1}_{ -\varepsilon, \delta_2} \vee \gamma^{\delta_1,L}_{-\varepsilon}\vee\gamma_{-\varepsilon, c}^{L} \vee -\gamma_{c}^{-L,L}\vee -\gamma_{-\varepsilon, c}^{-L},$$
	where 
	\begin{center}
		\begin{tabular}{r r l }
	$	\gamma_{-\varepsilon}^{-L, -\delta_1}:$&$\quad [-L, -\delta_1]\rightarrow \mathbb{C}, $& $\quad y \mapsto -\epsilon +i y,$\\[7pt]
	$	\gamma^{-\delta_1}_{-\varepsilon, \delta_2}:$&$\quad[-\epsilon, \delta_2]\rightarrow \mathbb{C}, $&$\quad  x\mapsto x -i\delta_1,$\\[7pt]
		$\gamma_{\delta_2}^{-\delta_1, \delta_1}:$&$\quad[-\delta_1, \delta_1]\rightarrow \mathbb{C},  $&$\quad y\mapsto \delta_2 +i y,$\\[7pt]$
			\gamma^{\delta_1}_{-\varepsilon, \delta_2}:$&$\quad[-\epsilon, \delta_2]\rightarrow \mathbb{C}, $&$\quad x\mapsto x +i\delta_1,$\\[7pt]$
			\gamma_{-\varepsilon}^{\delta_1,L}:$&$\quad[\delta_1,L]\rightarrow \mathbb{C},  $&$\quad y \mapsto -\epsilon +i y, $\\[7pt]$
		\gamma_{-\varepsilon, c}^{L}:$&$\quad[-\epsilon, c]\rightarrow \mathbb{C}, $&$\quad x\mapsto x +iL,$\\[7pt]$
		\gamma_{c}^{-L,L}:$&$\quad [-L, L]\rightarrow \mathbb{C},  $&$\quad y \mapsto c +i y,$\\[7pt]$
		\gamma_{-\varepsilon, c}^{-L}:$&$\quad[-\epsilon, c]\rightarrow \mathbb{C},  $&$\quad x\mapsto x -iL,\vspace{1em}$
		\end{tabular}
	\end{center}
		\begin{figure}\centering
		\includegraphics[scale=1]{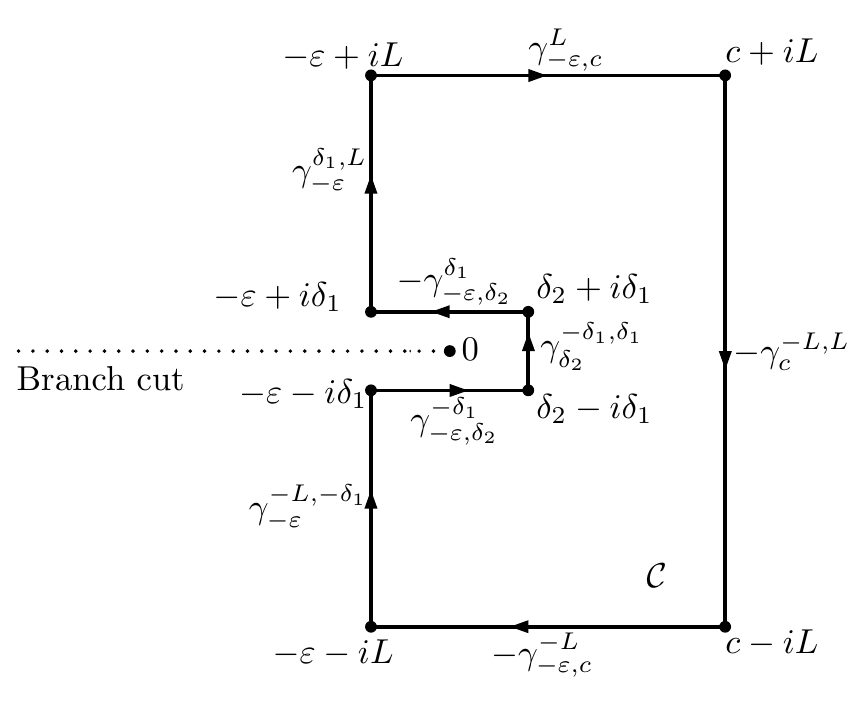}
		\caption{The closed curve  $\mathcal{C}$ in the complex plane.}
		\label{contour}
	\end{figure}
with $\varepsilon,\, \delta_1, \,\delta_2, L >0$ and $c>0$ as in Lemma \ref{F}. We choose $\varepsilon$ small enough such that the zeros of $H_1^{(1)}(is)$ lay in the left of the curve $\mathcal{C}$. This can be done since from \cite{NIST:DLMF} the zeros of $H_1^{(1)}(is)$ all have real part less than $-\alpha$ for some $\alpha>0$. Hence $f_0$ is holomorphic in the interior of  $\mathcal{C}$ and  by Cauchy's integral theorem
	$$\frac{1}{2 \pi i }\int_\mathcal{C} f_0(s)e^{st} ds=0, $$
	or equivalently
	\begin{align*}
\frac{1}{2 \pi i }\int_{\gamma_{c}^{-L,L}} f_0(s)e^{st} ds=\ & \frac{1}{2 \pi i }\int_{\gamma_{-\varepsilon}^{-L, -\delta_1}} f_0(s)e^{st} ds+\frac{1}{2 \pi i }\int_{\gamma^{-\delta_1}_{-\varepsilon, \delta_2}} f_0(s)e^{st} ds\\&+ \frac{1}{2 \pi i }\int_{\gamma_{\delta_2}^{-\delta_1, \delta_1}} f_0(s)e^{st} ds  -\frac{1}{2 \pi i }\int_{\gamma^{\delta_1}_{-\varepsilon, \delta_2}} f_0(s)e^{st} ds\\&+\frac{1}{2 \pi i }\int_{\gamma_{-\varepsilon}^{\delta_1,L}} f_0(s)e^{st} ds + \frac{1}{2 \pi i }\int_{	\gamma_{-\varepsilon, c}^{L} \vee\,-\gamma_{-\varepsilon, c}^{-L} } f_0(s)e^{st} ds.\end{align*}
	Taking the limit $L\rightarrow +\infty$, the limit $\delta_2\rightarrow 0^+$ and the limit $\delta_1\rightarrow 0^+$,  by definition of the inverse Laplace transform as a Bromwich integral we have
	\begin{equation}
	\begin{aligned}
		F_0(t)= &\, \frac{1}{2 \pi i }\lim\limits_{\delta_1\rightarrow 0^+}\bigg[\lim\limits_{L\rightarrow +\infty}\left(\int_{\gamma_{-\varepsilon}^{-L, -\delta_1}}f_0(s)e^{st} ds+ \int_{\gamma_{-\varepsilon}^{\delta_1,L}} f_0(s)e^{st} ds \right)\\&+\lim\limits_{\delta_2\rightarrow 0^+}\bigg(\int_{\gamma^{-\delta_1}_{-\varepsilon, \delta_2}} f_0(s)e^{st} ds+\int_{\gamma_{\delta_2}^{-\delta_1, \delta_1}} f_0(s)e^{st} ds  -\int_{\gamma^{\delta_1}_{-\varepsilon, \delta_2}} f_0(s)e^{st} ds\bigg)\bigg], 
	\end{aligned}
	\label{F_0}
	\end{equation}
	where we have used the fact that \begin{align*}
		&\lim\limits_{L\rightarrow +\infty}\int_{\gamma_{-\varepsilon, c}^{L} \vee\,-\gamma_{-\varepsilon, c}^{-L} } f_0(s)e^{st} ds\\&=\lim\limits_{L\rightarrow +\infty}\left(e^{iLt}\int_{-\epsilon}^c f_0(x+iL)e^{vt}dx-e^{-iLt}\int_{-\epsilon}^c f_0(x-iL)e^{xt}dx\right)=0	\end{align*}by uniform convergence.
	Let us deal with the first two terms in \eqref{F_0}. As we did in Lemma \ref{F}, we write $f_0(s)=g_0(s) + \frac{1}{4s}$ with $g_0(s)$ integrable on the line $\mathrm{Re} (s)=-\varepsilon $. Hence we have that 
	$$\lim\limits_{\delta_1\rightarrow 0^+}\lim\limits_{L\rightarrow +\infty}\left(\int_{\gamma_{-\varepsilon}^{-L, -\delta_1}}g_0(s)e^{st} ds+ \int_{\gamma_{-\varepsilon}^{\delta_1,L}} g_0(s)e^{st} ds \right)$$ is finite and is exponentially decaying in $t$. We focus now on the contour integration of $\frac{1}{4s}$. We compute that
		\begin{align*}
	&\frac{1}{2\pi i }\left(\int_{\gamma_{-\varepsilon}^{-L, -\delta_1}}\frac{e^{st}}{4s} ds+ \int_{\gamma_{-\varepsilon}^{\delta_1,L}} \frac{e^{st}}{4s}ds\right)\\[5pt]& = \frac{e^{-\varepsilon t}}{8\pi}\left(\int_{-L}^{-\delta_1}\frac{e^{iyt}}{-\varepsilon + iy}dy +\int_{\delta_1}^{L}\frac{e^{iyt}}{-\varepsilon + iy}dy\right)=\frac{e^{-\varepsilon t}}{4\pi} \int_{\delta_1}^L\mathrm{Re}\left(\frac{e^{iyt}}{-\varepsilon+iy}\right)dy.
	\end{align*}
Since the integrand is even and it is bounded close to zero, we have that
$$\lim\limits_{\delta_1\rightarrow 0^+}\lim\limits_{L\rightarrow +\infty}\int_{\delta_1}^L\mathrm{Re}\left(\frac{e^{iyt}}{-\varepsilon+iy}\right)dy=\frac{1}{2}\lim\limits_{L\rightarrow +\infty}\int_{-L}^L\mathrm{Re}\left(\frac{e^{iyt}}{-\varepsilon+iy}\right)dy.$$ The integral in the right-hand side can be computed by considering the contour integration of the associated complex-valued function along the semicircle of radius $L$ centered at the origin and closed on the real axis. Since the function is holomorphic, by Cauchy's integral theorem 
$$0=\int_{-L}^L\mathrm{Re}\left(\frac{e^{iyt}}{-\varepsilon+iy}\right)dy + \int_{C_L}\mathrm{Re}\left(\frac{e^{izt}}{-\varepsilon+iz}\right)dz, $$ where $C_L$ is the arch of the semicircle. The second term vanishes as $L\rightarrow +\infty$, hence it yields 
$$\lim\limits_{L\rightarrow +\infty}\int_{-L}^L\mathrm{Re}\left(\frac{e^{iyt}}{-\varepsilon+iy}\right)dy=0.$$
For the last three terms in \eqref{F_0} we have that

	\begin{equation}
		\begin{aligned}
			\frac{1}{2 \pi i }\lim\limits_{\delta_1\rightarrow 0^+}\lim\limits_{\delta_2\rightarrow 0^+}\left(\int_{\gamma^{-\delta_1}_{-\varepsilon, \delta_2}} f_0(s)e^{st} ds+\int_{\gamma_{\delta_2}^{-\delta_1, \delta_1}} f_0(s)e^{st} ds - \int_{\gamma^{\delta_1}_{-\varepsilon, \delta_2}} f_0(s)e^{st} ds\right)\\ = \frac{1}{2 \pi i }\int_{-\varepsilon}^{0} \lim\limits_{\delta_1\rightarrow 0^+}\left(f_0(x-i\delta_1)- f_0(x+i\delta_1) \right)e^{xt} dx, 
		\end{aligned}
	\label{limlim}
	\end{equation}by uniform convergence.
 Using the analytic continuation formulas in Appendix A, we compute that
		\begin{align*}
			\lim\limits_{\delta_1\rightarrow 0^+}\left(f_0(x-i\delta_1)- f_0(x+i\delta_1) \right)&= \frac{i}{2}\left[\frac{3H_0^{(1)}(ix) +2H_0^{(2)}(ix)}{3H_1^{(1)}(ix) +2H_1^{(2)}(ix)}-\frac{H_0^{(1)}(ix)}{H_1^{(1)}(ix)}\right]\\[10pt]
			& =  -2\,\frac{\,J_0(ix)Y_1(ix)-Y_0(ix)J_1(ix)}{(5J_1(ix)+iY_1(ix))(J_1(ix)+iY_1(ix))},
		\end{align*}
		and using the series expansion of $J_n$ and $Y_n$ for $n=0,1$ in Appendix A we get
		\begin{equation}
			\lim\limits_{\delta_1\rightarrow 0^+}\left(f_0(x-i\delta_1)- f_0(x+i\delta_1) \right)= -\pi i x + O(x^2) \quad \,\mbox{as} \quad x\rightarrow 0. \label{diffbranch}
		\end{equation}Therefore in \eqref{limlim} we have 
		\begin{equation*}
		\begin{aligned}
		&	\frac{1}{2 \pi i }\int_{-\varepsilon}^{0}\lim\limits_{\delta_1\rightarrow 0^+}\left(f_0(x-i\delta_1)- f_0(x+i\delta_1) \right)e^{xt} dx\\&=-\frac{1}{2}\int_{-\varepsilon}^0 x e^{xt} dx + \int_{-\varepsilon}^0 O(x^2)e^{xt} dx\\&
			=\frac{1}{2t^2}\int_0^{\varepsilon t}\sigma e^{-\sigma}d\sigma + \int_0^\varepsilon O(x^2)e^{-xt}dx\,=\ \frac{1}{2t^2} + O(t^{-3}),
			\end{aligned}
		\end{equation*}
		which implies the statement of the lemma.
\end{proof}
	\begin{figure}\centering
		\includegraphics[scale=0.5]{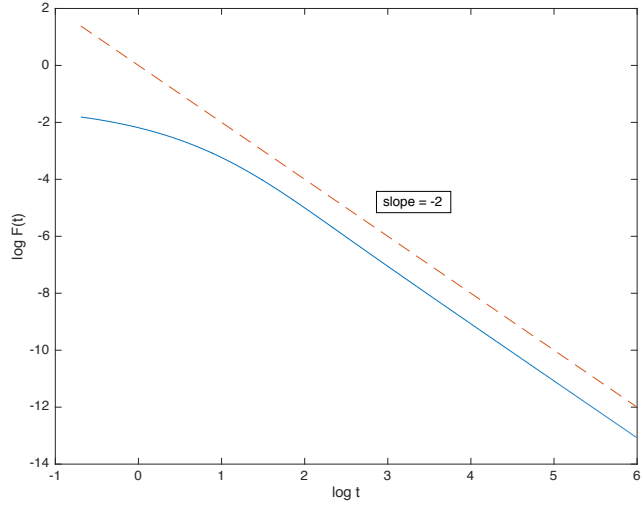}
		\caption{Polynomial decay of $F$: here $F(t)$ (full) is compared to $t^{-2}$ (dash) using two logarithmic scales, for $R=10\,\textrm{m}$ and $h_0=5\,\textrm{m}$.}
		\label{plotF}
	\end{figure}
	
The polynomial decay of the impulse response function $F$ given by Proposition \ref{t-2decrease} is numerically showed in Figure \ref{plotF} for a particular set of parameters.
	Moreover, the kernel $F$ satisfies the following equality, which will be used in the proof of the Theorem \ref{globalex}:
	
	\begin{lemma}
		The convolution kernel $F$ is such that
		\begin{equation}
		\int_{0}^{+\infty}F(t)dt =\frac{R}{2v_0}.
		\label{propertyF}
		\end{equation}

	\end{lemma}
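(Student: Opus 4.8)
The plan is to recognize that the quantity $\int_{0}^{+\infty}F(t)\,dt$ is nothing but the value of the Laplace transform of $F$ at $s=0$, and then to read off this value from the explicit expression for $f=\laplace{F}$ obtained in Lemma~\ref{F}. First I would record that $F$ is integrable on $\mathbb{R}_+$: by Lemma~\ref{F} the function $F$ is continuous on $[0,+\infty)$, hence bounded near the origin, and by Proposition~\ref{t-2decrease} it satisfies $|F(t)|\le M(1+t)^{-2}$ for all $t\ge0$. These two facts together give $F\in L^1(\mathbb{R}_+)$, so in particular the integral in \eqref{propertyF} converges absolutely.

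Next I would pass to the limit $s\to 0^+$ inside the Laplace integral. For $\Re(s)>0$ we have $f(s)=\int_{0}^{+\infty}F(t)e^{-st}\,dt$, and since $|F(t)e^{-st}|\le|F(t)|\in L^1(\mathbb{R}_+)$ for every $s>0$ while $e^{-st}\to1$ pointwise as $s\to0^+$, dominated convergence yields
\begin{equation*}
\int_{0}^{+\infty}F(t)\,dt=\lim_{s\to 0^+}f(s).
\end{equation*}

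It then remains to evaluate this limit from the closed form
\begin{equation*}
f(s)=\dfrac{iR\,H_0^{(1)}\!\left(\tfrac{isR}{v_0}\right)}{2v_0\,H_1^{(1)}\!\left(\tfrac{isR}{v_0}\right)}+\dfrac{R}{2v_0}.
\end{equation*}
Using the small-argument behaviour of the Hankel ratio recorded in Appendix~A, namely $\frac{H_0^{(1)}(is)}{H_1^{(1)}(is)}\sim -is\log(is)$ as $s\to0$ (equivalently the ratio tends to $0$ at the origin), the first term vanishes in the limit, so $\lim_{s\to 0^+}f(s)=\frac{R}{2v_0}$, which is exactly \eqref{propertyF}.

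The only genuine (and rather mild) obstacle is justifying the interchange of the limit and the integral at the boundary point $s=0$, i.e.\ an Abelian-type statement for the Laplace transform; this is precisely where the integrability of $F$ coming from the $(1+t)^{-2}$ decay of Proposition~\ref{t-2decrease} is essential, since without a summable bound the limit $s\to0^+$ could not be moved inside. Once that integrability is available, the remainder is a routine evaluation of the near-origin asymptotics of the Hankel functions, already collected in Appendix~A.
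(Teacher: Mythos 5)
Your proposal is correct and follows essentially the same route as the paper: evaluate $\laplace{F}$ at $s\to 0^+$ using the closed form from Lemma \ref{F}, justify the interchange of limit and integral by dominated convergence (with integrability coming from the continuity in Lemma \ref{F} and the $(1+t)^{-2}$ decay of Proposition \ref{t-2decrease}), and conclude via the small-argument asymptotics $\frac{H_0^{(1)}(is)}{H_1^{(1)}(is)}\sim -is\log(is)\to 0$ from Appendix A. The only difference is that you spell out the dominating function explicitly, which the paper leaves implicit.
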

	\begin{proof}
		By the definition of the Laplace transform and by Lemma \ref{F}, 
		\begin{equation}
		\int_{0}^{+\infty}F(t)e^{-st}dt= \dfrac{iR H_0^{(1)}\left(\dfrac{is R}{v_0}\right)}{2v_0H_1^{(1)}\left(\dfrac{is R}{v_0}\right)}+\dfrac{R}{2v_0}\quad\mbox{for}\quad \mathrm{Re} (s)>0.
		\label{laptraF}
		\end{equation}
		From Appendix A, we have that, as $s\rightarrow 0$, $$\dfrac{H_0^{(1)}(is)}{H_1^{(1)}(is)} \sim -is\log(is)\rightarrow 0.$$
		Hence, taking the limit $s\rightarrow 0^+$ in \eqref{laptraF} we get 
		\begin{equation}
		\int_{0}^{+\infty}F(t)dt= \dfrac{R}{2v_0},
		\label{integralF}
		\end{equation}where we have used Lebesgue's dominated convergence theorem due to Lemma \ref{F} and Proposition \ref{t-2decrease}.
	\end{proof}



\subsection{Integro-differential equation for the solid motion}
 From now on we suppose for simplicity that the bottom of the structure is flat, then $\zeta_w$ (as well as $h_w$) does not depend on the space variable $r$, but Proposition \ref{nonlinearcummins} holds for a structure with non-flat bottom as well.
We know from Proposition \ref{linzeta} that, considering the linear shallow water equations \eqref{linearmodel} in the exterior domain, we can write the trace of the surface elevation $\zeta_e$ at the boudary $r=R$ as a function of the time derivative of the displacement $\delta_G.$ Then the nonlinear differential equation \eqref{eqmotionIntro} describing the solid motion can be written as a nonlinear delay differential equation.

\begin{proposition}\label{nonlinearcummins}
	Considering the linear shallow water equations \eqref{extlinshallow} for the fluid motion in the exterior domain, the solid motion is described by the following second order nonlinear integro-differential equation:
	\begin{equation}\begin{aligned}
	(m+m_a(\delta_G)) \ddot{\delta}_G=& -\mathfrak{c}\delta_G-\nu\dot{\delta}_G+\mathfrak{c} \int_{0}^{t}F(s)\dot{\delta}_G(t-s)ds +\left(\mathfrak{b}(\dot{\delta}_G)+\beta(\delta_G)\right)\dot{\delta}^2_G \ ,
	\end{aligned}
	\label{expliciteqmotion}
	\end{equation}with $\mathfrak{c}$ as in \eqref{ODEparam}, $\nu=\dfrac{\mathfrak{c}R}{2v_0}$, $m_a(\delta_G)=\dfrac{b}{h_w(\delta_G)},$ $\beta(\delta_G)=\dfrac{b}{2h^2_w(\delta_G)}.$ The convolution kernel is defined by
	\begin{equation*} \displaystyle
	F(t)=\lim\limits_{v\rightarrow +\infty} \frac{1}{2\pi }\bigint_{-v}^{v}\left(\dfrac{iR H_0^{(1)}\left(\dfrac{i(c +i\omega) R}{v_0}\right)}{2v_0H_1^{(1)}\left(\dfrac{i(c +i\omega) R}{v_0}\right)}+\dfrac{R}{2v_0}\right)e^{(c+i\omega)t}dw\end{equation*}for any $c>0$ and 
	\begin{equation*}\mathfrak{b}(\dot{\delta}_G)=\dfrac{b}{\left(\int_{0}^{t}F(s)\dot{\delta}_G(t-s)ds -\frac{R}{2v_0}\dot{\delta}_G(t)+h_0\right)^2} \ \ \ \ \ \mbox{with} \ \ \ \ b=\frac{\pi \rho R^4}{8}.
	\end{equation*}
	\end{proposition}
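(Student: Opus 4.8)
The plan is to read off \eqref{expliciteqmotion} from two ingredients that are by now in place: the reduced equation of motion \eqref{eqmotionIntro} for the solid, and the closed convolution formula for the free-surface trace $\zeta_e(t,R)$ proved in Proposition \ref{linzeta}. First I would record \eqref{eqmotionIntro} as the starting point. It is the reduction of Newton's law $m\ddot{\delta}_G = -mg + \int_{r\le R}(\underline{P}_i - P_\mathrm{atm})$ obtained in Section \ref{FSequations} by inserting the explicit interior discharge $q_i = -\frac{r}{2}\dot{\delta}_G$ and solving the interior elliptic problem \eqref{P} for $\underline{P}_i$. The exterior fluid enters this reduced equation only through the boundary values $\zeta_e(t,R)$, in the coupling term, and $h_e(t,R) = \zeta_e(t,R)+h_0$, in the coefficient $\mathfrak{b}$, so that \eqref{eqmotionIntro} is exactly the object into which the linear exterior information must be fed.

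Next I would specialise the coefficients \eqref{ODEparam} to the flat-bottom geometry assumed in this subsection. Since $h_w(\delta_G,r) = h_{w,eq}+\delta_G(t)$ is then independent of $r$, one has $\partial_r h_w \equiv 0$, so the $r$-integral in $\beta$ vanishes and $\beta(\delta_G) = b/(2h_w^2(\delta_G))$, while $m_a(\delta_G) = \frac{\rho\pi}{2h_w(\delta_G)}\int_0^R r^3\,dr = b/h_w(\delta_G)$ with $b=\frac{\pi\rho R^4}{8}$. With these simplifications in hand I would substitute the convolution representation \eqref{linzetae} of Proposition \ref{linzeta}, namely $\zeta_e(t,R) = \int_0^t F(s)\dot{\delta}_G(t-s)\,ds - \frac{R}{2v_0}\dot{\delta}_G(t)$, into \eqref{eqmotionIntro}. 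The term $\mathfrak{c}\,\zeta_e(t,R)$ then splits into the delay term $\mathfrak{c}\int_0^t F(s)\dot{\delta}_G(t-s)\,ds$ and the linear damping $-\frac{\mathfrak{c} R}{2v_0}\dot{\delta}_G = -\nu\dot{\delta}_G$ with $\nu = \mathfrak{c} R/(2v_0)$; and rewriting $h_e(t,R) = \zeta_e(t,R)+h_0$ inside $\mathfrak{b}(h_e) = b/h_e^2(t,R)$ turns it into the stated $\mathfrak{b}(\dot{\delta}_G)$. Finally, the kernel $F$ is precisely the inverse Laplace transform of the symbol $f(s) = iRH_0^{(1)}(isR/v_0)/\big(2v_0H_1^{(1)}(isR/v_0)\big) + R/(2v_0)$ furnished by Lemma \ref{F}, which yields the displayed Bromwich-integral expression for $F$; collecting the terms gives \eqref{expliciteqmotion}.

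I expect the one point genuinely requiring care to be the validity of the reduced equation \eqref{eqmotionIntro} for the present linear-nonlinear system, rather than the substitution itself. Indeed the interior equations \eqref{nonlinearmodel}, the contact constraint \eqref{interiorconstraint}, and the explicit discharge $q_i$ are unchanged from Section \ref{FSequations}, but the boundary pressure correction \eqref{pcor} now reads $P_\mathrm{cor} = -\frac{\rho}{2} q_i^2/h_i^2|_{r=R}$, so I would re-examine its contribution to $\int_{r\le R}(\underline{P}_i - P_\mathrm{atm})$ --- splitting $\underline{P}_i$ into the harmonic extension of its constant-in-$r$ boundary value and the particular solution driven by the explicit source, both computable in closed form --- to confirm that the restoring term $-\mathfrak{c}\delta_G$, the coupling $\mathfrak{c}\zeta_e(t,R)$, the added mass and the quadratic coefficient assemble as in \eqref{eqmotionIntro}, with the static terms cancelling through the Archimedes equilibrium relation. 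By contrast, the true analytic difficulties --- existence and representation of $F$ through the Paley--Wiener/Smirnov argument of Lemma \ref{F}, and its $(1+t)^{-2}$ decay from Proposition \ref{t-2decrease} --- are already settled, so once the reduction is secured the remaining steps are routine algebra.
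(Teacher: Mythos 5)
Your primary route --- substituting the convolution representation \eqref{linzetae} of Proposition \ref{linzeta} into \eqref{eqmotionIntro}, specializing the coefficients \eqref{ODEparam} to the flat bottom (so $m_a(\delta_G)=b/h_w(\delta_G)$ and $\beta(\delta_G)=b/(2h_w^2(\delta_G))$ since $\partial_r h_w\equiv 0$), splitting $\mathfrak{c}\,\zeta_e(t,R)$ into the delay term and $-\nu\dot{\delta}_G$, rewriting $h_e(t,R)=\zeta_e(t,R)+h_0$ inside $\mathfrak{b}$, and identifying $F$ via Lemma \ref{F} --- is exactly the paper's own proof; the paper offers nothing beyond the paragraph preceding the proposition, which says precisely this.

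However, the step you single out as ``the one point genuinely requiring care'' does not resolve the way you anticipate, and this is worth spelling out. Carry out the re-derivation you propose: with $h_w$ independent of $r$ and $q_i=-\frac{r}{2}\dot{\delta}_G$, the elliptic problem is explicitly solvable, and $2\pi\int_0^R(\underline{P}_i-P_{\mathrm{atm}})\,r\,dr$ splits into the particular-solution contribution $-m_a(\delta_G)\ddot{\delta}_G+\frac{3b}{2h_w^2}\dot{\delta}_G^2$ plus the boundary-value contribution $\pi R^2\left[\rho g(\zeta_e-\zeta_w)_{|_{r=R}}+P_{\mathrm{cor}}\right]$, with the static terms cancelling by Archimedes, as you say. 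But with the linear--nonlinear corrector \eqref{pcor} one gets $\pi R^2P_{\mathrm{cor}}=-\frac{b}{h_w^2}\dot{\delta}_G^2$, so the quadratic coefficient collapses to $\frac{3b}{2h_w^2}-\frac{b}{h_w^2}=\beta(\delta_G)$ and the term $\mathfrak{b}(\dot{\delta}_G)\dot{\delta}_G^2$ \emph{disappears}: that term is generated precisely by the piece $\frac{\rho}{2}\frac{q_i^2}{h_e^2}\big|_{r=R}$ of the corrector in \eqref{P}, which \eqref{pcor} drops. In other words, \eqref{expliciteqmotion} as stated is what results from keeping \eqref{eqmotionIntro} (hence the corrector of \eqref{P}) and linearizing only the exterior dynamics to evaluate $\zeta_e(t,R)$ --- the paper's route and your primary one --- whereas the energy-conserving model of Section 3.1, built on \eqref{pcor}, yields the same equation without $\mathfrak{b}$. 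So your proposed verification would have exposed a discrepancy (one the paper itself glosses over, and which also matters for the energy argument in Theorem \ref{globalex}) rather than confirmed the assembly; a complete write-up should either state explicitly that \eqref{eqmotionIntro} is retained as the solid equation and only $\zeta_e(t,R)$ is recomputed, or else track how the choice of $P_{\mathrm{cor}}$ changes the coefficient of $\dot{\delta}_G^2$.
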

	\begin{remark}
	 In the integro-differential equation \eqref{expliciteqmotion} $m_a(\delta_G)$ is the time dependent added mass, $\mathfrak{c}$ is the hydrostatic coefficient and $\nu$ is the damping coefficient. The convolution term, whose kernel $F$ is the so-called impulse response (or retardation) function, accounts for fluid-memory effects that incorporate the energy dissipation due to the radiated waves coming from the motion of the structure. Moreover, linearizing \eqref{expliciteqmotion} around the equilibrium state, we get 
	 	\begin{equation}
	 	\left(m+m_a(0)\right)\ddot{\delta}_G(t)= -\mathfrak{c}\delta_G(t)-\nu\dot{\delta}_G(t)+\mathfrak{c} \int_{0}^{t}F(s)\dot{\delta}_G(t-s)ds.
	 	\label{cummins}
	 	\end{equation}This linear equation is nothing but the well-known Cummins equation for the vertical displacement \eqref{integrodiff}. Proposition \ref{nonlinearcummins} therefore provides a rigorous justification of the Cummins equation and generalizes it to take into account the nonlinear effects in the interior domain.
	\end{remark}

\begin{remark}Recall that in Proposition \ref{linzeta} we show that 
	\begin{equation}\zeta_e(t,R)=\int_{0}^{t}F(s)\dot{\delta}_G(t-s)ds - \dfrac{R}{2v_0}\dot{\delta}_G(t).
	\end{equation}
	Therefore, considering the linear equations \eqref{linearmodel}, the extension-trace operator \eqref{extensiontrace} becomes a linear convolution operator, that is
	\begin{equation}
		\mathcal{B} \left[\dot{\delta}_G\right](t)= \int_{0}^{t}K(s)\dot{\delta}_G(t-s)ds ,
	\end{equation}with the convolution kernel $K(s)=F(s)- \dfrac{R}{2v_0}\mathfrak{d}_{s=0},$ where $\mathfrak{d}_{s=0}$ is the Dirac delta distribution.	
	\end{remark}

	
	We state now the following global existence and uniqueness result of the solution to the solid motion equation in the case of linear shallow water equations for the fluid motion. Let us recall that $\rho_m$ and $H$ denote the density and the height of the solid, respectively. 		
\begin{theorem}The Cauchy problem for the nonlinear second order integro-differential equation
 \eqref{expliciteqmotion} with initial data
 \begin{equation*}\delta_G(0)=\delta_0 \neq0, \ \ \ \ \  \ \ \ \ \ \ \dot{\delta}_G(0)=0,\end{equation*} admits a unique solution $\delta_G\in C^2([0, +\infty), \mathbb{R})$ provided 
 \begin{equation}
 |\delta_0 |< \min \left(h_0 -\frac{\rho_m H}{\rho}, \sqrt{\frac{\rho_m H}{\rho  g}}\frac{h_0}{\|F\|_{L^1(\mathbb{R}_+)}+\frac{R}{2v_0}}\right).
 	\label{admissibility}
 \end{equation}
 \label{globalex}
\end{theorem}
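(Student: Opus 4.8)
The plan is to combine a local existence result for the integro-differential equation \eqref{expliciteqmotion} with an \emph{a priori} bound coming from the conservation of the total energy (Proposition \ref{conservationenergy}), and then to check that the admissibility condition \eqref{admissibility} is exactly what prevents the two geometric degeneracies — the solid touching the bottom and the fluid height at the contact line vanishing — that would make the coefficients of \eqref{expliciteqmotion} blow up. First I would establish local existence and uniqueness. Writing \eqref{expliciteqmotion} as a first order system for $(\delta_G,\dot\delta_G)$ and isolating $\ddot\delta_G$, the right-hand side is a smooth function of $(\delta_G,\dot\delta_G)$ together with the causal convolution $\dot\delta_G\mapsto\int_0^t F(s)\dot\delta_G(t-s)\,ds$, as long as $h_w(\delta_G)=h_{w,eq}+\delta_G>0$ and $h_e(t,R)=\zeta_e(t,R)+h_0>0$. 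Since $F\in L^1(\mathbb{R}_+)$ (which follows from the continuity of $F$ and the decay $|F(t)|\le M(1+t)^{-2}$ of Proposition \ref{t-2decrease}), this convolution is a bounded, Lipschitz operator on $C([0,T])$ with norm controlled by $\|F\|_{L^1(\mathbb{R}_+)}$. At $t=0$ the data $\delta_G(0)=\delta_0$, $\dot\delta_G(0)=0$ give $h_w(\delta_0)>0$ (by the first bound in \eqref{admissibility}) and $\zeta_e(0,R)=0$, hence $h_e(0,R)=h_0>0$; a standard contraction argument then yields a unique $C^2$ solution on a maximal interval $[0,T_{\max})$, with the continuation alternative that either $T_{\max}=+\infty$ or one of $|\delta_G|,\ |\dot\delta_G|,\ h_w(\delta_G)^{-1},\ h_e(t,R)^{-1}$ becomes unbounded as $t\to T_{\max}$.

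The core of the proof is the energy bound. On $[0,T_{\max})$ the solution of \eqref{expliciteqmotion} corresponds to a genuine solution of the coupled linear–nonlinear system \eqref{linearmodel}–\eqref{nonlinearmodel}, so Proposition \ref{conservationenergy} applies and $E_{tot}$ is constant. Using $q_i=-\tfrac r2\dot\delta_G$ and the flat-bottom relation $h_{w,eq}=h_0-\rho_m H/\rho$, the interior kinetic energy equals $\tfrac12 m_a(\delta_G)\dot\delta_G^2$, and the combined solid-plus-interior potential energy reduces, after the equilibrium (Archimedes) balance cancels the term linear in $\delta_G$, to $\tfrac12\mathfrak c\delta_G^2$ up to an additive constant. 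Writing $E_{ext}(t)\ge0$ for the nonnegative exterior fluid energy carried by the radiated wave, conservation of $E_{tot}$ becomes the identity $\tfrac12(m+m_a(\delta_G))\dot\delta_G^2+\tfrac12\mathfrak c\delta_G^2+E_{ext}(t)=\tfrac12\mathfrak c\delta_0^2$, the right-hand side being the value at $t=0$ (exterior fluid flat and at rest, $\dot\delta_G(0)=0$). Discarding $E_{ext}(t)\ge0$ yields
\[
\tfrac12\big(m+m_a(\delta_G(t))\big)\dot\delta_G(t)^2+\tfrac12\mathfrak c\,\delta_G(t)^2\le \tfrac12\mathfrak c\,\delta_0^2\qquad\text{for all }t\in[0,T_{\max}),
\]
whence, since $m_a\ge0$ and $\mathfrak c=\rho g\pi R^2$, $m=\rho_m H\pi R^2$, I obtain the two pointwise bounds $|\delta_G(t)|\le|\delta_0|$ and $|\dot\delta_G(t)|\le\sqrt{\mathfrak c/m}\,|\delta_0|=\sqrt{\rho g/(\rho_m H)}\,|\delta_0|$.

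It remains to convert these into the non-degeneracy needed for continuation. The bound on $\delta_G$ gives $h_w(\delta_G(t))\ge h_{w,eq}-|\delta_0|>0$, which is guaranteed precisely by the first entry of the minimum in \eqref{admissibility}. For the contact-line height, Proposition \ref{linzeta} together with the bound on $\dot\delta_G$ gives $|\zeta_e(t,R)|\le(\|F\|_{L^1(\mathbb{R}_+)}+\tfrac{R}{2v_0})\sup_{[0,t]}|\dot\delta_G|\le(\|F\|_{L^1(\mathbb{R}_+)}+\tfrac{R}{2v_0})\sqrt{\rho g/(\rho_m H)}\,|\delta_0|$, and the second entry of \eqref{admissibility} is exactly the requirement that this quantity be strictly less than $h_0$, so that $h_e(t,R)\ge h_0-|\zeta_e(t,R)|>0$ uniformly in time. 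All four quantities in the continuation alternative therefore stay bounded, and bounded away from their singular values, on $[0,T_{\max})$, which forces $T_{\max}=+\infty$; the $C^2$ regularity then follows by reading off $\ddot\delta_G$ from \eqref{expliciteqmotion}.

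The hard part is the energy decomposition of the second paragraph: one must verify that the conserved total energy splits \emph{exactly} into the mechanical energy $\tfrac12(m+m_a)\dot\delta_G^2+\tfrac12\mathfrak c\delta_G^2$ of the reduced equation plus a nonnegative radiated part, i.e. that the nonlinear cubic contributions coming from $\beta(\delta_G)$ and $\mathfrak b(\dot\delta_G)$ and the memory term represent only an exchange of energy with the fluid and not a genuine source. Equivalently, if one insisted on arguing purely at the ODE level, the crux would be to show that $F$ is a kernel of positive type, so that $\int_0^T\dot\delta_G\big(\int_0^tF(s)\dot\delta_G(t-s)\,ds\big)dt$ combined with the damping $-\nu\int_0^T\dot\delta_G^2$ has the correct sign; here the relation \eqref{propertyF}, namely $\int_0^{+\infty}F=\tfrac{R}{2v_0}$, is what ties the damping coefficient $\nu=\mathfrak c R/(2v_0)$ to the low-frequency limit of the kernel and makes this sign structure consistent. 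Routing the a priori bound through Proposition \ref{conservationenergy} sidesteps that positive-type estimate, which is why I would base the argument on the fluid-structure energy rather than on the reduced kernel directly.
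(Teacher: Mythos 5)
Your proposal is correct and follows essentially the same route as the paper: local existence for the first-order system, then the exact energy identity $\tfrac12\bigl(m+m_a(\delta_G)\bigr)\dot\delta_G^2+\tfrac12\mathfrak c\,\delta_G^2+E_{ext}=\tfrac12\mathfrak c\,\delta_0^2$ obtained from Proposition \ref{conservationenergy} (with the Archimedes balance cancelling the terms linear in $\delta_G$), giving the bounds $|\delta_G(t)|\le|\delta_0|$ and $|\dot\delta_G(t)|\le\sqrt{\rho g/(\rho_m H)}\,|\delta_0|$, and finally the admissibility condition \eqref{admissibility} together with Proposition \ref{linzeta} keeping $h_w$ and $h_e(\cdot,R)$ bounded away from zero, exactly as in \eqref{hwmin}--\eqref{hemin}. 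The only cosmetic difference is that you organize the globality step as a blow-up/continuation alternative (tracking $h_w^{-1}$ and $h_e^{-1}$), whereas the paper reaches the same conclusion by iterating its local existence argument after establishing the same non-degeneracy bounds.
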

\begin{remark}
		One needs to consider the parameters of the problem such that 
		$$h_{w,eq}=h_0 - \frac{\rho_m H}{\rho}>0,$$
		which means that the fluid height under the solid at the equilibrium position is positive.
	\end{remark}
 
\begin{proof}Let us write \eqref{expliciteqmotion} as the following first order nonlinear integro-differential equation on $x(t)=(\delta_G(t), \dot{\delta}_G(t))^T$: \begin{equation}\begin{cases}
	\dfrac{d x(t)}{dt}= \mathcal{F}\left(t, x(t), \int_0^t K(t,s) x(s)ds\right),   \\[10pt]
	x(0)=(\delta_0,0)^T,
	\end{cases}
	\label{firstorder}\end{equation} with $K(t,s)=F(t-s).$
 The local existence and uniqueness of the solution to \eqref{firstorder} can be obtained using an iterative method in the Banach space $C([0,T), \mathbb{R}^2)$.
 This integro-differential equation can also be seen as a functional differential equation with infinite delay by extending $\delta_G$ and $\dot{\delta}_G$ respectively by $\delta_0$ and by $0$ for $t<0$. We refer to Liu and Magal \cite{LiuMag} for the analysis of this type of delayed equations, which would give asymptotic stability of the equilibrium position in the case of an exponentially decaying convolution kernel.\\
 We show here the extension of the existence interval $[0,T)$ to $[0,+\infty)$ using the conservation of total fluid-structure energy. 
 First, we prove in the following lemma that the solution is bounded.
	\begin{lemma}\label{bound}
		The displacement $\delta_G$ and its derivative $\dot{\delta}_G$ are both bounded.
	\end{lemma}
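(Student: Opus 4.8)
The plan is to read off a priori bounds directly from the conservation of the total fluid–structure energy proved in Proposition \ref{conservationenergy}. Even though equation \eqref{expliciteqmotion} has been reduced to an ODE in $\delta_G$ alone, a solution $\delta_G$ determines a genuine fluid state: the exterior pair $(\zeta_e,q_e)$ is recovered from the linear exterior problem with boundary datum ${q_e}_{|_{r=R}}=-\frac{R}{2}\dot\delta_G$, while in the interior $q_i=-\frac r2\dot\delta_G$ and $h_i=h_{w,eq}+\delta_G$. For this full state the quantity $E_{tot}=E_{SW}+E_{sol}$ is conserved. Working on the maximal interval of existence $[0,T)$, I would rewrite $E_{tot}$ as a coercive quadratic in $(\delta_G,\dot\delta_G)$ plus manifestly nonnegative fluid contributions, and then obtain the bounds by discarding the latter.

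First I would use the contact constraint $\zeta_i=\zeta_w=\zeta_{w,eq}+\delta_G$ (flat bottom, so $\zeta_{w,eq}$ is $r$-independent) to evaluate the interior potential part of $E_{SW}$, obtaining $2\pi\frac{\rho}{2}g\int_0^R\zeta_i^2\,rdr=\frac{\mathfrak c}{2}(\zeta_{w,eq}+\delta_G)^2$ with $\mathfrak c=\rho g\pi R^2$. Substituting $q_i=-\frac r2\dot\delta_G$ and $h_i=h_{w,eq}+\delta_G$ shows the interior kinetic part equals $\frac12 m_a(\delta_G)\dot\delta_G^2$, nonnegative as long as $h_w(\delta_G)>0$. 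Expanding the quadratic and combining the term linear in $\delta_G$ with the gravitational term $mg\delta_G$ from $E_{sol}$, the two linear contributions cancel by the equilibrium (Archimedes) relations $\zeta_{w,eq}=-\rho_m H/\rho$ and $m=\rho_m\pi R^2 H$. What remains, up to a constant $C_0$ independent of $t$, is
\begin{equation*}
E_{tot}=\tfrac12\bigl(m+m_a(\delta_G)\bigr)\dot\delta_G^2+\tfrac{\mathfrak c}{2}\delta_G^2+E_{ext}+C_0,
\end{equation*}
where $E_{ext}=2\pi\frac{\rho}{2}g\int_R^{+\infty}\zeta_e^2\,rdr+2\pi\frac{\rho}{2}\int_R^{+\infty}\frac{q_e^2}{h_0}\,rdr\geq0$ is the exterior fluid energy.

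Evaluating this identity at $t=0$, where the fluid is at rest ($\zeta_e=q_e=0$, $\dot\delta_G(0)=0$) and $\delta_G(0)=\delta_0$, gives $E_{tot}(0)=\frac{\mathfrak c}{2}\delta_0^2+C_0$, so conservation yields, for every $t\in[0,T)$,
\begin{equation*}
\tfrac12\bigl(m+m_a(\delta_G)\bigr)\dot\delta_G^2+\tfrac{\mathfrak c}{2}\delta_G^2+E_{ext}(t)=\tfrac{\mathfrak c}{2}\delta_0^2.
\end{equation*}
Discarding the nonnegative terms $E_{ext}(t)$ and $\frac12 m_a(\delta_G)\dot\delta_G^2$ immediately gives $|\delta_G(t)|\leq|\delta_0|$ and $|\dot\delta_G(t)|\leq|\delta_0|\sqrt{\mathfrak c/m}$, which is the claimed boundedness. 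The only genuine obstacle is that the whole computation presupposes the energy is well defined, i.e. that $h_w(\delta_G)=h_{w,eq}+\delta_G$ stays positive so that $m_a(\delta_G)$ and the interior kinetic energy remain finite and nonnegative. This is precisely where the first part of the admissibility condition \eqref{admissibility} enters: it forces $|\delta_0|<h_{w,eq}$, so the bound $|\delta_G|\leq|\delta_0|$ is self-consistent and guarantees $h_w(\delta_G)\geq h_{w,eq}-|\delta_0|>0$ throughout $[0,T)$. Hence the estimate never degenerates and the argument closes.
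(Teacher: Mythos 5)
Your proof is correct and follows essentially the same route as the paper: conservation of the total energy, evaluation of the interior energy via the flat-bottom contact constraint and Archimedes' principle (which cancels the linear terms), discarding the nonnegative exterior and interior kinetic contributions to get $|\delta_G|\leq|\delta_0|$ and $|\dot\delta_G|\leq\sqrt{\rho g/(\rho_m H)}\,|\delta_0|$, and invoking the first part of the admissibility condition \eqref{admissibility} to keep $h_w>0$. The only difference is presentational: the paper spells out the self-consistency step as an explicit continuity argument (introducing $t^*=\sup\{t:h_w>0\}$ and showing $t^*=T$), which you compress into the phrase ``self-consistent,'' but the underlying idea is the same.
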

	\begin{proof}
		From Proposition \ref{conservationenergy} we know that the energy of the coupled floating structure system considering the linear shallow water equations for the fluid motion 
		\begin{equation}
		E_{tot}(t)=\dfrac{1}{2}m\dot{\delta}_G^2(t) + mg\delta_G(t)+ E_{SW}(t)
		\end{equation}is conserved. 
		Moreover, $E_{SW}(t)$ can be written as the sum of the fluid energy in the interior domain, 
		
		\begin{equation*}E_{int}(t)=\dfrac{1}{2}\rho g \,2\pi\int_0^R\zeta^2_w(t)rdr-\dfrac{1}{2}\rho g\,2\pi\int_0^R\zeta^2_{w,eq}rdr +\dfrac{1}{2}\rho\,2\pi\int_{0}^R\dfrac{q_i^2(t,r)}{h_w(t)}rdr,\end{equation*}and the fluid energy in the exterior domain, $$E_{ext}(t)=\dfrac{1}{2}\rho g 2\pi\int_R^{+\infty}\zeta^2_e(t,r)rdr + \dfrac{1}{2}\dfrac{\rho}{h_0}\,2\pi\int_{R}^{+\infty}q^2_e(t,r)rdr.$$To get the expression of the fluid energy in the interior domain we use the constraint \eqref{interiorconstraint} and we add the constant term $\frac{1}{2}\rho g\, 2\pi\int_0^R\zeta^2_{w,eq}r dr$ in order to have zero energy at the equilibrium.
		From Archimedes' principle we have\begin{equation}-\rho_m H -\rho \zeta_{w,eq}=0
		\label{zetaWeq}
		\end{equation} and, since the bottom of the solid is flat, we have $$z_{G,eq}=\zeta_{w,eq}+\dfrac{H}{2}.$$ Then \begin{equation*}z_{G,eq}=\left(\dfrac{1}{2}-\dfrac{\rho_m}{\rho}\right)H
		\label{zgeq}
		\end{equation*} and 
		\begin{equation}
		\zeta_w(t)=z_G(t)-\dfrac{H}{2}=\delta_G(t)+z_{G,eq}-\dfrac{H}{2}=\delta_G(t)-\dfrac{\rho_m H}{\rho}.
		\label{zetaWrho}
		\end{equation}
		Using \eqref{zetaWrho} and the fact that $q_i(t,r)=-\dfrac{r}{2}\dot{\delta}_G(t)$ (see Section \ref{FSequations}),
		the fluid energy in the interior domain $E_{int}(t)$ becomes 
		$$E_{int}(t)=\dfrac{1}{2}g \rho\pi R^2\left(\delta_G(t)-\dfrac{\rho_m H}{\rho}\right)^2-\dfrac{1}{2}g\rho\pi R^2\dfrac{\rho^2_m H^2}{\rho^2} + \dfrac{\pi\rho R^4}{16h_w(t)}\dot{\delta}^2 _G(t).$$
		In particular the total energy at instant $t=0$ is  $$E_{tot}(0)=mg\delta_0+\dfrac{1}{2}g\rho \pi R^2\left(\delta_0 -\dfrac{\rho_m H}{\rho}\right)^2-\dfrac{1}{2}g\rho\pi R^2\dfrac{\rho^2_m H^2}{\rho^2},$$ using $\delta_G(0)=\delta_0$ and $\dot{\delta}_G(0)=0.$ By the conservation of total energy and using the identity $m=\rho_m \pi R^2 H$ we have\begin{equation} \begin{aligned}
		\left(\frac{m}{2}+\frac{\pi\rho R^4}{16 h_w(t)}\right)\dot{\delta}_G^2(t)=& \ mg\delta_0+\frac{1}{2}g\rho \pi R^2\left(\delta_0 -\frac{\rho_m H}{\rho}\right)^2-mg\delta_G(t)
		\\&-\frac{1}{2}g \rho\pi R^2\left(\delta_G(t) -\frac{\rho_m H}{\rho}\right)^2 - E_{ext}(t)\\
		=&\ \frac{1}{2} g \rho \pi R^2 (\delta_0^2 - \delta_G^2(t)) - E_{ext}(t).
		\end{aligned}
		\label{energycons}
		\end{equation}
		Consider $t^*=\sup\{t\in[0, T )\ | \ h_w(s)>0 \mbox{ for} \ s\in (0, t) \}$. From condition \eqref{admissibility} we have $h_w(0)=h_{w,eq}+\delta_0>0$ , hence $t^* >0$. Suppose $t^*<T$. Then, for $t\in(0, t^*)$ the right-hand side of \eqref{energycons} has to be non-negative. By solving the inequality with respect to $\delta_G(t)$, we have 
		$$-\sqrt{\delta_0^2 -\dfrac{2E_{ext}(t)}{g\rho\pi R^2}}\leq\delta_G(t)\leq\sqrt{\delta_0^2 -\dfrac{2E_{ext}(t)}{g\rho\pi R^2}}.$$
		By the non-negativity of $E_{ext}(t)$ we get the bound \begin{equation} -|\delta_0|\leq \delta_{G}(t)\leq |\delta_0|.
		\label{bounddelta}
		\end{equation}
 Moreover, from \eqref{energycons} we have 
\begin{equation*}
	\begin{aligned}
	\frac{m}{2}\dot{\delta}^2_G \leq \frac{1}{2} g \rho \pi R^2 \delta_0^2,
	\end{aligned}
\end{equation*}
which yields the bound 
		\begin{equation}
		|\dot{\delta}_G(t)|\leq \sqrt{\frac{\rho g}{\rho_m H}}|\delta_0|.
		\label{bounddeltadot}
		\end{equation}
		Using condition \eqref{admissibility}, by continuity we have $h_w(t^*)\geq h_{w,eq}-|\delta_0| >0$ and there exists $\epsilon>0$ small enough such that $h_w(t^* + \epsilon)>0$, where $t^*$ is the maximal time such that $h_w(t)>0$ for $t\in(0, T)$. Then necessarily $t^*= T,$ which implies that the bounds \eqref{bounddelta} - \eqref{bounddeltadot} hold in the existence interval $(0, T)$.
	\end{proof}
	The bounds \eqref{bounddelta} - \eqref{bounddeltadot} give
	\begin{equation*}
	h_w(t)= h_{w,eq} + \delta_G(t) \geq h_{w,eq} - |\delta_0|,
	\end{equation*}
	\begin{equation*}
	\begin{aligned}
	h_e(t,R)&=\int_{-\infty}^{0}F(-\theta)\dot{\delta}_G(t+\theta)d\theta - \frac{R}{2v_0}\dot{\delta}_G(t) + h_0\\[5pt]& \geq  -\left(\|F\|_{L^1(\mathbb{R}_+)}+ \frac{R}{2v_0}\right)\sqrt{\frac{\rho g}{\rho_m H}}|\delta_0| + h_0. 
	\end{aligned}
	\end{equation*}
	We remark that $F\in L^1(\mathbb{R}_+)$ due to Lemma \ref{F} and Proposition \ref{t-2decrease}.
The admissibility condition \eqref{admissibility} on $\delta_0$ guarantees that for all $t\geq 0$ \begin{equation}h_{w}(t)\geq h_{w,eq} -|\delta_0|>0,
	\label{hwmin}\end{equation} \begin{equation}h_e(t,R)\geq -\left(\|F\|_{L^1(\mathbb{R}_+)}+ \frac{R}{2v_0}\right)\sqrt{\frac{\rho g}{\rho_m H}}|\delta_0| + h_0 >0.\label{hemin}\end{equation} 
	 Therefore, the existence interval can be extended to $[0,+\infty)$ by iterating the previous argument.
\end{proof}
\begin{remark}
	The conditions \eqref{hwmin} - \eqref{hemin} express the physical fact that, during all the motion, both the solid and the fluid trace at the solid walls do not touch the bottom of domain.
\end{remark}
\section{Numerical method} 
\begin{figure}\centering
	\includegraphics[scale=0.75]{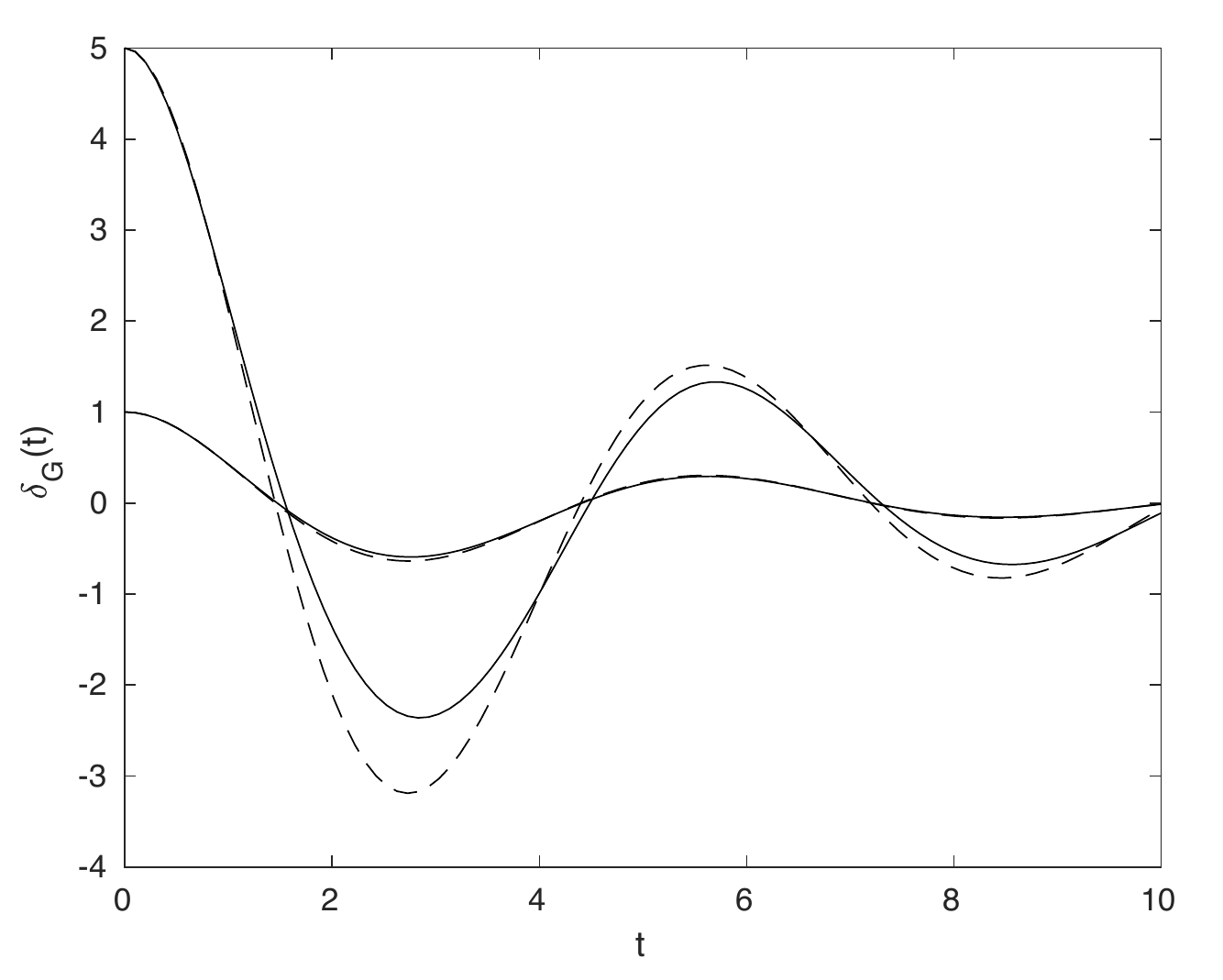}
	\caption{Time evolution of the displacement $\delta_G$ given by the nonlinear integro-differential \eqref{expliciteqmotion} (full) and by the linear Cummins equation \eqref{cummins} (dash) for two different initial data.}
	\label{compODEmotion}
\end{figure}
In order to solve numerically the delay differential equation \eqref{expliciteqmotion}  we write it under the form 
$$\frac{dy}{dt}(t)=f\left(t,y(t),y(d_1(t)),...,y(d_k(t))\right),$$
with $d_1(t),...,d_k(t)$ the components of the non-constant delays vector $d(t)$. In our case we have chosen $d_k(t)=t- k dt$ with $dt=0.1$ and $k=1,...,N$ for $N=100$. Then, we implement in our code the MATLAB solver \texttt{ddesd}, which integrates with the explicit Runge-Kutta (2,3) pair and interpolant of \texttt{ode23}. For more details on the solver we refer to Shampine \cite{Shampine}. Moreover, we compute the convolution integral applying the trapezoidal integration method following Armesto et al. \cite{Armesto}.
In an analogous way, we compute the convolution kernel $F$ for a given set of time steps $n\Delta t$ with $n=1,...,N$ since the influence of the Kernel is negligible after some time $t^*=N\Delta t$.
Then, we compare the numerical result given by the nonlinear integro-differential equation \eqref{expliciteqmotion} with the one obtained from its linear approximation. In Figure \ref{compODEmotion} we consider $h_0=15 \ \text{m}$, $R=10 \ \text{m}$, $H=10 \ \text{m}$, $\rho=1000 \ \text{kg}/\text{m}^3$ and the volume density of the solid $\rho_m=0.5 \ \rho.$ We choose two different initial data: $\delta_0=1 \ \text{m}$ and $\delta_0=5 \ \text{m}$. One can see that for large amplitudes the nonlinear effects should not be neglected in order to better describe the solid motion. This difference justifies the approach to keep nonlinearities in the equation of the floating body problem in the interior domain. Moreover,one can note that the displacement goes to zero but the structure definitely does not reach its equilibrium position: this is due to the motion of the fluid which makes the solid constantly move.

\appendix

\section{Hankel functions}
In this appendix we show some results and properties for the Hankel functions. 
Let us consider the following differential equation:
\[z^{2}\dfrac{{\mathrm{d}}^{2}w}{{\mathrm{d}z}^{2}}+z\dfrac{\mathrm{d}w}{\mathrm{d%
	}z}+(z^{2}-\nu^{2})w=0, \ \ \ z\in\mathbb{C}.\]
This differential equation is called Bessel equation of index $\nu$. Solutions to this equation are called Bessel functions. Let us consider the case when $\nu=n$, with $n\in \mathbb{Z}$. Bessel functions of the first kind, denoted by $J_n(z)$,

\[\mathop{J_n\/}\nolimits\!\left(z\right)=(\tfrac{1}{2}z)^n\sum_{k=0}^{%
	\infty}(-1)^{k}\dfrac{(\tfrac{1}{4}z^{2})^{k}}{k!\mathop{\Gamma\/}\nolimits\!%
	\left(n+k+1\right)}.\]
\label{serieJ}
are entire in z.\\\\
Bessel functions of the second kind, denoted by $Y_{n}(z)$
\begin{align*}\mathop{Y_{n}\/}\nolimits\!\left(z\right)=&-\dfrac{(\tfrac{1}{2}z)^{-n}}{\pi}%
	\sum_{k=0}^{n-1}\dfrac{(n-k-1)!}{k!}\left(\tfrac{1}{4}z^{2}\right)^{k}+\dfrac{2}%
	{\pi}\mathop{\log\/}\nolimits\!\left(\tfrac{1}{2}z\right)\mathop{J_{n}\/}%
	\nolimits\!\left(z\right)\\&-\dfrac{(\tfrac{1}{2}z)^{n}}{\pi}\sum_{k=0}^{\infty}(%
	\mathop{\psi\/}\nolimits\!\left(k+1\right)+\mathop{\psi\/}\nolimits\!\left(n+k%
	+1\right))\dfrac{(-\tfrac{1}{4}z^{2})^{k}}{k!(n+k)!},\end{align*}
where $\psi=\dfrac{\Gamma^\prime}{\Gamma}$, with $\Gamma$ the Gamma function, have a branch point in $z=0$. Both $J_n$ and $Y_n$ are real valued if $z$ is real. 
Let us define $$H_n^{(1)}(z):=J_n(z)+i Y_n(z),$$ $$H_n^{(2)}(z):=J_n(z)-i Y_n(z).$$ We call them respectively $Hankel \mbox{ } functions$ of first order and second order with index $n$, and they are solutions to the Bessel equation. Each solution has a branch point at $z=0$ for all $n$. The principal branches of $H_n^{(1)}(z)$ and $H_n^{(2)}(z)$ are two-valued and discontinuous on the cut along the negative real axis. They are holomorphic functions of $z$ throughout the complex plane cut (see Chapter 9 of \cite{AbraStegun}).\\ 
Now let us show some representations of these functions useful for our problem. From \cite{Treat} we have an integral representation for $z=x>0$:
$$H_n^{(1)}(x)=\dfrac{2e^{-n\pi i/2}}{\pi i}\int_{0}^{+\infty}e^{ix\cosh(s)}\cosh(ns)ds$$ and $$H_n^{(2)}(x)=-\dfrac{2e^{n\pi i/2}}{\pi i}\int_{0}^{+\infty} e^{-ix\cosh(s)}\cosh(ns)ds,$$ 
and a series representation for large $|z|$ and $0 <\arg z < \pi$:
$$H_n^{(1)}(z)=\sqrt{\dfrac{2}{\pi z}}e^{i\left(z - \frac{\pi}{4}- n\frac{\pi}{2}\right)}\bigg[\sum_{k=0}^{p-1} \dfrac{(-)^k a_k(n)}{z^k} + O(z^{-p})\bigg]$$
$$H_n^{(2)}(z)=\sqrt{\dfrac{2}{\pi z}}e^{-i\left(z - \frac{\pi}{4}-  n\frac{\pi}{2}\right)}\bigg[\sum_{k=0}^{p-1} \dfrac{a_k(n)}{z^k} + O(z^{-p})\bigg]$$ with $$a_0(n)=1,$$ $$ a_k(n)=\dfrac{\{4n^2 -1^2\}\{4n^2-3^2\}\cdots \{4n^2-(2k-1)^2)\}}{8^k k! (i)^k}, \ \ \ \ k>0.$$
Last we recall analytic continuation formulas for $m \in \mathbb{Z}$ (see \cite{NIST:DLMF}):
\[\mathop{{H^{(1)}_{n}}\/}\nolimits\!\left(ze^{m\pi i}\right)=(-1)^{mn-1}((m-1)%
\mathop{{H^{(1)}_{n}}\/}\nolimits\!\left(z\right)+m\mathop{{H^{(2)}_{n}}\/}%
\nolimits\!\left(z\right)),\]
\[\mathop{{H^{(2)}_{n}}\/}\nolimits\!\left(ze^{m\pi i}\right)=(-1)^{mn}(m\mathop%
{{H^{(1)}_{n}}\/}\nolimits\!\left(z\right)+(m+1)\mathop{{H^{(2)}_{n}}\/}%
\nolimits\!\left(z\right)).\]
$$\displaystyle\mathop{{H^{(1)}_{n}}\/}\nolimits\!\left(\overline{z}\right)=%
\overline{\mathop{{H^{(2)}_{n}}\/}\nolimits\!\left(z\right)}, \ \ \ \
\displaystyle\hskip 10.0pt\mathop{{H^{(2)}_{n}}\/}\nolimits\!\left(\overline%
{z}\right)=\overline{\mathop{{H^{(1)}_{n}}\/}\nolimits\!\left(z\right)}.$$


\bibliographystyle{siam}
\bibliography{bibliography}

\end{document}